\newtheorem{formula}{}[section]
\newtheorem{proposition}[formula]{Proposition}
\newtheorem{corollary}[formula]{Corollary}
\newtheorem{theorem}[formula]{Theorem}
\theoremstyle{definition}
\newtheorem{definition}[formula]{Definition}
\newtheorem{example}[formula]{Example}
\theoremstyle{remark}
\newtheorem*{remark}{Remark}
\newtheorem*{conjecture}{Conjecture}
\begin{document}
\title[The Variety of Lie algebras of maximal class]
{The Variety of Lie algebras of maximal class}
\author{Dmitry Millionshchikov}
\thanks{It is partially supported by the RFBR grants
08-01-00541 and 08-01-91855, by the President grant of the support
of Scientific schools 2185.2008.1 and the grant "The Universities
of Russia"}
\address{Department of Mechanics and Mathematics of the Moscow State University (Lomonosov), 119992 Moscow, Russia}
\email{million@higeom.math.msu.su}

\begin{abstract}
We present an explicit description of the affine variety $M_{\it
Fil}$ of Lie algebras of the maximal class (filiform Lie
algebras): the formulas of polynomial equations that determine
this variety are written. The affine variety  $M_{\it Fil}$ can be
considered as the base of the nilpotent versal deformation of
${\mathbb N}$-graded Lie algebra $\mathfrak{m}_0$.
\end{abstract}
\date{}
\maketitle
\section*{Introduction}
The methods of the Lie algebras deformation theory are used in the
study of nilpotent lie algebras for many years. One knows that
there are only a finite number of pairwise non isomorphic Lie
algebras in dimensions less or equal six, but already in the
dimension  $7$ an one-parameter family of isomorphism classes of
such algebras appears for the first time. In the higher dimensions
the {\it varieties of nilpotent Lie algebras} are considered. More
often it means the variety of Lie algebra laws, defined on the
$n$-dimensional vector space ${\mathbb K}^n$ with a fixed basis
$e_1,\dots,e_n$. As a result one gets an affine variety in
${\mathbb K}^{n^3}$ (the coordinates in ${\mathbb K}^{n^3}$ are
the components of the tensor $c_{ij}^k$, $[e_i,e_j]=c_{ij}^ke_k$,
and polynomial equations on $c_{ij}^k$ are obtained from the
Jacobi identity as well as from from the nilpotency condition).
The generic points of the variety $M^n$ are so called {\it
filiform} Lie algebras (the term that was introduced by Michelle
Vergne) -- nilpotent Lie algebras with the maximal nil-index
$n{-}1$ for a given dimension $n$. Vergne have shown, that an
arbitrary filiform Lie algebra is isomorphic to some deformation
$[,]+\Psi$ of the graded filiform Lie algebra ${\mathfrak m}_0(n)$
with the bracket $[,]$, when the integrability condition for the
cocycle $\Psi \in H^2_+({\mathfrak m}_0(n),{\mathfrak m}_0(n))$
(i.e. the Jacobi identity for $[,]+\Psi$) is equivalent to the
vanishing of its Nijenhuis-Richardson square $\left[\Psi, \Psi
\right]=0$. The algebra $\mathfrak{m}_0(n)$ is defined by its
basis $e_1,\dots,e_n$ and nontrivial commutator relations:
$[e_1,e_i]=e_{i+1}, i=2,\dots,n{-}1$. Vergne have calculated the
dimensions of the spaces $H^2_+({\mathfrak m}_0(n),{\mathfrak
m}_0(n))$. Later her approach has become the cornerstone of the
whole number of researches (classifications of filiform Lie
algebras of small dimensions, the description of the irreducible
components of the variety $M^n$, and etc.) of the whole number of
authors:  Khakimdjanov, Goze, Ancochea-Bermudez and others. after
decomposing vector $\Psi=\sum x_{i,r}\Psi_{i,r}$ on the basis
$\Psi_{i,r}$ of the space $H^2_+({\mathfrak m}_0(n),{\mathfrak
m}_0(n))$ (the basis that was introduced by Khakimdjanov
\cite{Kh}) the integrability condition $\left[\Psi, \Psi
\right]=0$ is equivalent to some system of quadratic equations on
variables $x_{i,r}$. The Affine variety, defined by this system,
began to be called {\it the variety of filiform Lie algebras}. At
the same time, the study of these brackets up to an isomorphism
was carried out only in small dimendsions($\dim {\mathfrak g} \le
12$) and the answer, as a rule, was given in the form of the table
of the computer calculations with the aid of some package of the
symbolic calculations like Maple or Mathematica. In this case the
whole set of equations on unknowns $x_{i,r}$ for an arbitrary
dimension $n$ have been never written out. For instance in
\cite{EcMN} only an algorithm for finding of such equations was
proposed.

Beginning from the 50ths of the last century {\it $p$-groups of
maximal class} began actively to be studied in group theory:
groups of order $p^n$ and of index of the nilpotency $n{-}1$.
Later for the study of $p$-groups and pro-$p$-groups the notion of
index of the nilpotency have been substituted on a new invariant,
on the so-called {\it coclass} of the group. The application of
methods of Lie algebra theory with the study of $p$- and
pro-$p$-groups led to the fact, that  Lie algebras of maximal
class began to studied in finite characteristic. In some papers
they are called also {\it narrow} or {\it thin} Lie algebras (see
\cite{ShZ2}). Residually nilpotent Lie algebra $\mathfrak{g}$ is
called a Lie algebra of {\it maximal class} or of coclass $1$, if
$$\sum_{i\ge 1}(\dim
(C^i\mathfrak{g}/C^{i+1}\mathfrak{g})-1)=1,$$ where
$C^i\mathfrak{g}$ denotes $i$-th ideal of lower descending series
of $\mathfrak g$. If $\dim\mathfrak{g}< \infty$,  the this
condition is equivalent to the fact?, that $\mathfrak{g}$ is a
filiform Lie algebra.

The most elementary example of infinite dimensional Lie algebra of
maximal class is the algebra $\mathfrak{m}_0$
--- the direct limit of algebras $\mathfrak{m}_0(n)$.
It follows from Vergne's theorem \cite{V}, that an arbitrary
finitely generated Lie algebra of maximal class is isomorphic to
some special deformation of $\mathfrak{m}_0$. The space
 of such deformations has the structure of a direct limit
$M_{\it Fil}$ of affine varieties, imbedded in $\lim_n {\mathbb
K}^n$. $M_{\it Fil}$ is a base of nilpotent versal deformation of
the algebra ${\mathfrak m}_0$. The main result of the present
article is the Theorem \ref{main} with explicit formulas of
polynomial equations, that define $M_{\it Fil}$. The answer in
finite dimensional case is also provided. On has to remark that
the study of homogeneous deformations of $\mathfrak{m}_0$ was
started by Fialowski and Wagemann in \cite{FialWag}.

The paper is organized in the following way. In the first section
definitions and facts concerning Lie algebras of maximal class are
presented. Section 2 is devoted to the cohomology of ${\mathbb
N}$-graded Lie algebra ${\mathfrak m}_0$. In the third section we
describe the affine variety of Lie algebra s of maximal class in
the form of nilpotent versal deformation. Let us remark that all
considerations are valid for a field of zero characteristic. In
the section 4 we consider a variety of filiform Lie algebras of
finite dimension $n$. The number of unknowns of the system is
equal to the dimension of non negative part $\dim H^2_+({\mathfrak
m}_0((n), {\mathfrak m}_0(n))$ of the second cohomology with the
coefficients in the adjoint representation \cite{V}, and the
dimension $\dim H^3_+({\mathfrak m}_0(n), {\mathfrak m}_0(n))$
answers for the number of equations.

\section{Filiform Lie algebras and Lie algebras of maximal class}
The sequence of ideals of a Lie algebra$\mathfrak{g}$
$$C^1\mathfrak{g}=\mathfrak{g} \; \supset \;
C^2\mathfrak{g}=[\mathfrak{g},\mathfrak{g}] \; \supset \; \dots \;
\supset \; C^{k}\mathfrak{g}=[\mathfrak{g},C^{k-1}\mathfrak{g}] \;
\supset \; \dots$$ is called the lower descending central series
of the Lie algebra $\mathfrak{g}$.

A Lie algebra $\mathfrak{g}$ is called nilpotent, if there exists
$s$ such that
$$C^{s+1}\mathfrak{g}=[\mathfrak{g}, C^{s}\mathfrak{g}]=0,
\quad C^{s}\mathfrak{g} \: \ne 0.$$ The natural number $s$ is said
to be the nil-index of the nilpotent Lie algebra $\mathfrak{g}$.
\begin{proposition}
Let $\mathfrak{g}$ be a $n$-dimensional nilpotent Lie algebra.
Then we have the following estimate for its nil-index: $\quad s
\le n-1$.
\end{proposition}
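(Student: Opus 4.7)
The plan is to track the dimensions $d_i := \dim C^{i}\mathfrak{g}$ along the lower central series and show that they decrease strictly as long as the series has not yet vanished. Combined with the nilpotency condition, this will force the chain to terminate before reaching $n+1$ terms.

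First I would prove strict decrease. Suppose for contradiction that $C^{i+1}\mathfrak{g} = C^{i}\mathfrak{g}$ for some $i$ with $C^{i}\mathfrak{g} \ne 0$. Applying $[\mathfrak{g},-]$ iteratively then yields $C^{j}\mathfrak{g} = C^{i}\mathfrak{g}$ for every $j \ge i$, contradicting the existence of $s$ with $C^{s+1}\mathfrak{g} = 0$. Hence $C^{i+1}\mathfrak{g} \subsetneq C^{i}\mathfrak{g}$ for all $1 \le i \le s$, which gives the strict chain
\[
n = d_1 > d_2 > \cdots > d_s > d_{s+1} = 0 .
\]
A simple length count already yields $s \le n$.

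To sharpen this to $s \le n-1$, I would show that the first drop $d_1 - d_2$ is at least $2$ whenever $\mathfrak{g}$ is non-abelian (the abelian case gives $s = 1$ and is trivial for $n \ge 2$). The key ingredient is a Nakayama-type observation for nilpotent Lie algebras: if $\dim \mathfrak{g}/[\mathfrak{g},\mathfrak{g}] = 1$, with lift $x$, then $\mathfrak{g} = \mathbb{K}x + C^{2}\mathfrak{g}$, and expanding brackets shows $[\mathfrak{g},\mathfrak{g}] = [\mathfrak{g},\mathbb{K}x + C^2\mathfrak{g}] \subseteq C^{3}\mathfrak{g}$. Iterating together with nilpotency collapses $C^{2}\mathfrak{g}$ to zero, forcing $\mathfrak{g}$ abelian. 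In the non-abelian case one therefore has $d_1 - d_2 \ge 2$, which combined with the $s-1$ remaining strict drops $d_i - d_{i+1} \ge 1$ for $2 \le i \le s$ gives $n = d_1 \ge 2 + (s-1) = s + 1$.

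The main obstacle is the last step: the upgrade from $s \le n$ to $s \le n - 1$. It rests entirely on the Nakayama-type claim that a nilpotent Lie algebra is generated by any lift of a basis of its abelianization; this is the only place where true nilpotency (and not merely strict decrease of the $d_i$) is used in a non-trivial way. Everything else is bookkeeping with the lower central series.
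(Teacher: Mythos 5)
The paper states this proposition without proof, so there is nothing to compare against; your argument is correct and is the standard one — strict decrease of the lower central series under nilpotency, sharpened by the observation that a non-abelian nilpotent Lie algebra satisfies $\dim\bigl(\mathfrak{g}/[\mathfrak{g},\mathfrak{g}]\bigr)\ge 2$ (your ``Nakayama-type'' step, which is exactly the usual proof that such an algebra cannot be generated by one element). The only caveat is the degenerate case $n=1$, where with the paper's indexing the abelian line has $s=1>n-1=0$; that is an artifact of the statement as written, not a gap in your proof, and you have correctly disposed of the abelian case for $n\ge 2$.
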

\begin{definition}
A nilpotent $n$-dimensional Lie algebra $\mathfrak{g}$ is called
filiform if its nil-index is equal to $s=n-1$.
\end{definition}
\begin{example}
A Lie algebra $\mathfrak{m}_0(n)$, that is defined by its basis
$e_1, e_2, \dots, e_n$ with commutator relations:
$$ [e_1,e_i]=e_{i+1}, \; \forall \; 2\le i \le n{-}1,$$
is filiform.
\end{example}
Let $\mathfrak{g}$ be a Lie algebra. We will call the set $F$ of
subspaces
$$\mathfrak{g} \supset \ldots \supset F^{i}\mathfrak{g} \supset
F^{i{+}1}\mathfrak{g} \supset \ldots, \qquad i \in \mathbb Z,
$$
a decreasing filtration $F$ of the Lie algebra $\mathfrak{g}$, if
the set $F$ is compatible with the Lie algebra structure:
$$[F^{k}\mathfrak{g},F^{l}\mathfrak{g}]
\subset F^{k+l}\mathfrak{g}\qquad \forall k,l \in \mathbb Z.$$ Let
$\mathfrak{g}$ be a filtered Lie algebra. The graded Lie algebra
$${\rm gr}_F\mathfrak{g}=\bigoplus_{k=1} ({\rm
gr}_F\mathfrak{g})_k, \qquad ({\rm
gr}_F\mathfrak{g})_k=F^{k}\mathfrak{g}/F^{k{+}1}\mathfrak{g},$$ is
called the the associated graded Lie algebra ${\rm
gr}_F\mathfrak{g}$.

The ideals $C^{k}\mathfrak{g}$ of lower descending series form
decreasing filtration $C$ of a Lie algebra $\mathfrak{g}$
$$C^{1}\mathfrak{g}=\mathfrak{g} \supset C^{2}\mathfrak{g} \supset
\ldots \supset C^{k}\mathfrak{g} \supset \ldots, \qquad
[C^{k}\mathfrak{g},C^{l}\mathfrak{g}] \subset
C^{k+l}\mathfrak{g}.$$ Let us consider also the associated graded
Lie algebra${\rm gr}_C\mathfrak{g}$.
\begin{proposition}
Let $\mathfrak{g}$ be a filiform Lie algebra and  ${\rm
gr}_C\mathfrak{g}= \oplus_i ({\rm gr}_C\mathfrak{g})_i$ be the
corresponding associated (with respect to the canonical filtration
$C$) graded Lie algebra. Then
$$
\dim ({\rm gr}_C\mathfrak{g})_1=2, \quad \dim ({\rm
gr}_C\mathfrak{g})_2=\ldots= \dim ({\rm
gr}_C\mathfrak{g})_{n{-}1}=1.
$$
\end{proposition}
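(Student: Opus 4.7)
The plan is to prove the proposition by combining three simple facts about the lower central series of a filiform algebra: the total dimension identity, a lower bound of $1$ on each graded component up to level $n{-}1$, and a lower bound of $2$ on the first component. Since the claimed dimensions already saturate all of these bounds, only a counting argument will be needed at the end.

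First I would fix notation $d_k=\dim(C^k\mathfrak{g}/C^{k+1}\mathfrak{g})$ and record that $\sum_{k\ge 1} d_k=\dim\mathfrak{g}=n$. Next I would show that $d_k\ge 1$ for every $k$ in the range $1\le k\le n{-}1$. The key observation is that if $C^k\mathfrak{g}=C^{k+1}\mathfrak{g}$ for some $k$, then $[\mathfrak{g},C^k\mathfrak{g}]=C^k\mathfrak{g}$, and iterating gives $C^k\mathfrak{g}=C^j\mathfrak{g}$ for all $j\ge k$; nilpotency then forces $C^k\mathfrak{g}=0$, hence $k\ge n$ by the filiform hypothesis. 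Thus the lower central series strictly decreases up to step $n{-}1$.

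The slightly more substantial step is showing $d_1\ge 2$, equivalently $\dim\mathfrak{g}/[\mathfrak{g},\mathfrak{g}]\ge 2$ (for $n\ge 3$; the case $n=2$ being the abelian algebra which is trivially filiform). Here I would use the standard nilpotent analogue of Nakayama's lemma: if $\mathfrak{g}/[\mathfrak{g},\mathfrak{g}]$ is one-dimensional and spanned by the class of $x$, then $x$ generates $\mathfrak{g}$ as a Lie algebra, because any lift of a generating set of $\mathfrak{g}/[\mathfrak{g},\mathfrak{g}]$ generates $\mathfrak{g}$. But a Lie algebra generated by a single element is abelian, so $\mathfrak{g}$ would be at most one-dimensional, contradicting $n\ge 3$.

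Finally I would combine these: the inequalities $d_k\ge 1$ for $1\le k\le n{-}1$ together with $d_1\ge 2$ give $\sum_{k=1}^{n-1}d_k\ge n$, which matches the total dimension $n$ with no slack. Therefore equality must hold in every inequality, yielding $d_1=2$ and $d_2=\cdots=d_{n-1}=1$, as claimed. I expect no real obstacle; the only subtlety is the Nakayama-type argument used to pin down $d_1\ge 2$, and it is standard for nilpotent Lie algebras.
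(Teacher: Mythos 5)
Your argument is correct and complete: the identity $\sum_{k=1}^{n-1}d_k=n$, the strict decrease of the lower central series up to step $n{-}1$, and the Nakayama-type bound $d_1\ge 2$ together force the claimed dimensions by pure counting. The paper states this proposition without any proof, so there is nothing to compare against; your write-up is the standard argument and fills the omission correctly.
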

\begin{corollary}
Let $\mathfrak{g}$ be a filiform Lie algebra, then
$$\sum_{i=1}^{\dim{\mathfrak g}{-}1}(\dim (C^i\mathfrak{g}/C^{i+1}\mathfrak{g})-1)=1,$$
where only the first summand is non equal to zero.
\end{corollary}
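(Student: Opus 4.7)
The plan is to read this statement as a direct consequence of the preceding Proposition on the dimensions of the graded components $(\mathrm{gr}_C\mathfrak{g})_i = C^i\mathfrak{g}/C^{i+1}\mathfrak{g}$. There is essentially nothing to prove beyond substituting those dimensions into the sum and checking that the telescoping count gives exactly~$1$.

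More concretely, first I would set $n = \dim \mathfrak{g}$ and observe that since $\mathfrak{g}$ is filiform its nil-index equals $n-1$, so $C^{n}\mathfrak{g} = 0$ while $C^{n-1}\mathfrak{g} \neq 0$. Consequently the only potentially nonzero quotients $C^i\mathfrak{g}/C^{i+1}\mathfrak{g}$ appearing in the sum are those with $1 \le i \le n-1$, which is exactly the range of summation. Next I would invoke the Proposition immediately above: $\dim(C^1\mathfrak{g}/C^2\mathfrak{g}) = 2$ and $\dim(C^i\mathfrak{g}/C^{i+1}\mathfrak{g}) = 1$ for every $i$ with $2 \le i \le n-1$.

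Substituting, the $i=1$ summand contributes $2-1=1$, while each of the summands for $i=2,\dots,n-1$ contributes $1-1=0$. Summing gives
$$\sum_{i=1}^{n-1}\bigl(\dim(C^i\mathfrak{g}/C^{i+1}\mathfrak{g})-1\bigr) = 1,$$
with only the first term nonzero, which is the claim.

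There is no real obstacle here: the only subtle point is making sure the summation range $i=1,\dots,n-1$ matches the Proposition's range $1,\dots,n-1$ (i.e.\ that no extra nonzero quotients are lurking beyond index $n-1$), which is guaranteed by the nil-index being exactly $n-1$. The rest is bookkeeping.
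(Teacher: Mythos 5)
Your proof is correct and follows exactly the route the paper intends: the corollary is an immediate consequence of the preceding Proposition on the dimensions of $({\rm gr}_C\mathfrak{g})_i = C^i\mathfrak{g}/C^{i+1}\mathfrak{g}$, and the paper itself offers no further argument. Your additional check that the nil-index $n-1$ forces $C^n\mathfrak{g}=0$, so no nonzero quotients lie outside the summation range, is sound bookkeeping and matches the spirit of the text.
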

\begin{definition}
A Lie algebra $\mathfrak{g}$ is called residually nilpotent if
$$\cap_{i=1}^{\infty}C^{s}\mathfrak{g}=0.$$
\end{definition}
\begin{definition}
The coclass of a Lie algebra $\mathfrak{g}$ (which can be equal to
infinity) is the natural number $cc(\mathfrak{g})$ that is defined
as $cc(\mathfrak{g})= \sum_{i\ge 1}(\dim
(C^i\mathfrak{g}/C^{i+1}\mathfrak{g})-1)$. Finitely generated
residually nilpotent Lie algebras of coclass $1$ is also called as
algebras of maximal class( infinite dimensional filiform Lie
algebras).
\end{definition}
\begin{example}
Let us define $L_k$ as the Lie algebra of poynomial vector fields
on the real line${\mathbb R}^1$, having the zero $x=0$ of order
not less than $k+1$.

The algebra $L_k$ can be defined  by its infinite basis and
structure relations
$$e_i=x^{i+1}\frac{d}{dx}, \; i \in {\mathbb N},\; i \ge k, \quad \quad
[e_i,e_j]= (j-i)e_{i{+}j}, \; \forall \;i,j \in {\mathbb N}.$$

Let consider $k=1$, it is simple to note, that $L_1$ is a
residually nilpotent Lie algebra of maximal class, generated by
two elements: $e_1$ and $e_2$.
\end{example}
We recall that ${\mathbb Z}$-graded Lie algebra $W$ defined by its
basis $e_i,\; i \in {\mathbb Z}$ and relations
$$[e_i.e_j]=(j-i)e_{i+j}, \quad \forall i,j \in {\mathbb Z},$$
is called the Witt algebra. The Lie algebra $L_1$ sometimes is
called the positive part $W_+=\oplus_{i>0}(W)_i$ of the Witt Lie
algebra.

Let us give the two additional examples infinite dimensional
${\mathbb N}$-graded Lie algebras of maximal class.
\begin{example}
Let us denote by $\mathfrak{m}_0$ the direct limit of Lie
algebras$\mathfrak{m}_0(n)$ when $n \to \infty$. This
infinitedimensional Lie algebra $\mathfrak{m}_0$ can be defined by
a basis $e_1, e_2, \dots, e_n, \dots $ and structure relations:
$$ \label{m_0}
[e_1,e_i]=e_{i+1}, \; \forall \; i \ge 2.$$
\end{example}
\begin{example}
A Lie algebra $\mathfrak{m}_2$ is defined by its infinite basis
$e_1, e_2, \dots, e_n, \dots $ and structure relations
$$
[e_1, e_i ]=e_{i+1}, \quad \forall \; i \ge 2; \quad \quad [e_2,
e_j ]=e_{j+2}, \quad \forall \; j \ge 3.
$$
\end{example}
We will call the filtration $C$ of a residually nilpotent Lie
algebra $\mathfrak g$ the canonical filtration of $\mathfrak g$.
\begin{remark}
There are the following isomorphisms ${\rm gr}_C{L_1}\cong {\rm
gr}_C\mathfrak{m}_2 \cong {\rm gr}_C\mathfrak{m}_0 \cong
\mathfrak{m}_0$.
\end{remark}
\begin{theorem}[Vergne \cite{V}]
\label{V_ne1} Let
$\mathfrak{g}=\oplus_{\alpha}\mathfrak{g}_{\alpha}$ be a ${\mathbb
N}$-graded $n$-dimensional filiform Lie algebra and
$$
\dim \mathfrak{g}_1=2, \quad \dim \mathfrak{g}_2=\ldots= \dim
\mathfrak{g}_{n{-}1}=1.
$$
Then

1) if $n=2k+1$, then $\mathfrak{g}$ is isomorphic to
$\mathfrak{m}_0(2k+1)$;

2) if $n=2k$, then $\mathfrak{g}$ is isomorphic either to
$\mathfrak{m}_0(2k)$ or to the algebra $\mathfrak{m}_1(2k)$
defined by the basis $e_1, \ldots, e_{2k}$ and structure realtions
$$
[e_1, e_i ]=e_{i+1}, \; i=2, \ldots, 2k{-}1, \quad \quad [e_j,
e_{2k{+}1{-}j} ]=(-1)^{j{+}k}e_{2k}, \quad j=2, \ldots, k.
$$
\end{theorem}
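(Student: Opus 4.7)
My plan is to follow the classical route: normalize a homogeneous basis, parametrize the remaining structure constants, and extract constraints from the Jacobi identity.

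First, observe that $\mathfrak{g}$ is generated by $\mathfrak{g}_1$: by the grading $[\mathfrak{g},\mathfrak{g}] = \bigoplus_{k \ge 2} \mathfrak{g}_k$, so $\mathfrak{g}/[\mathfrak{g},\mathfrak{g}] = \mathfrak{g}_1$ is two-dimensional. A standard induction using the Jacobi identity to move a factor of $\mathfrak{g}_1$ to the outside of any nested bracket then gives $\mathfrak{g}_{k+1} = [\mathfrak{g}_1, \mathfrak{g}_k]$ for all $k \ge 1$. Picking $\epsilon_k$ a nonzero element of $\mathfrak{g}_k$, for each $2 \le k \le n-2$ the map $\mathfrak{g}_1 \to \mathfrak{g}_{k+1}$, $x \mapsto [x,\epsilon_k]$, is surjective with one-dimensional kernel $V_k$. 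Over a field of characteristic zero the finite union $\bigcup V_k$ does not cover $\mathfrak{g}_1$, so I pick $e_1 \in \mathfrak{g}_1 \setminus \bigcup V_k$ and any complement $e_2 \in \mathfrak{g}_1 \setminus \mathbb{K}e_1$. Setting $e_{k+1} := [e_1, e_k]$ yields a homogeneous basis with $[e_1, e_k] = e_{k+1}$ for $2 \le k \le n-1$.

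I then write $[e_2, e_k] = \beta_k e_{k+1}$ and, for $i,j \ge 2$ with $i+j \le n+1$, $[e_i, e_j] = c_{i,j} e_{i+j-1}$. The Jacobi identity on $(e_1, e_i, e_j)$, valid for $2 \le i < j$ and $i+j \le n$, yields the recurrence $c_{i+1,j} = c_{i,j} - c_{i,j+1}$, iterating to the closed form $c_{m,j} = \sum_{l=0}^{m-2} (-1)^l \binom{m-2}{l} \beta_{j+l}$. The change of basis $e_2 \mapsto e_2 - \beta_3 e_1$ shifts every $\beta_k$ by $-\beta_3$, so we may assume $\beta_3 = 0$. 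Antisymmetry $c_{i,i} = 0$ then yields linear relations on the $\beta_k$ (valid whenever $2i - 1 \le n$), and the Jacobi identity on $(e_2, e_i, e_j)$ yields the quadratic relations $\beta_i c_{i+1,j} + \beta_j c_{i,j+1} = \beta_{i+j-1} c_{i,j}$. Induction on $k$, combining a linear and a quadratic relation at each step, shows that these together force $\beta_k = 0$ for $3 \le k \le n-2$.

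For the remaining parameter $\beta_{n-1}$, the closed form gives $c_{i, n+1-i} = (-1)^i \beta_{n-1}$, while its antisymmetric partner $-c_{i, n+1-i}$ must equal $c_{n+1-i, i}$, which the same formula evaluates as $(-1)^{n-1-i}\beta_{n-1}$. The resulting consistency condition $(-1)^i = (-1)^{n-i}$ is automatic for even $n$ but forces $\beta_{n-1} = 0$ for odd $n$. Hence for $n = 2k+1$ every $\beta_k$ vanishes and $\mathfrak{g} \cong \mathfrak{m}_0(n)$. For $n = 2k$, $\beta_{n-1}$ is free: if zero we again obtain $\mathfrak{m}_0(n)$, and if nonzero a diagonal rescaling $(e_1, e_2) \mapsto (\mu e_1, \nu e_2)$ normalizes $\beta_{n-1} = (-1)^k$, after which the closed form for $c_{j, 2k+1-j}$ reproduces the defining brackets $[e_j, e_{2k+1-j}] = (-1)^{j+k}e_{2k}$ of $\mathfrak{m}_1(2k)$.

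The principal obstacle in this plan is the inductive elimination of the interior $\beta_k$: the linear relations $c_{i,i} = 0$ on their own admit any constant sequence as a solution, so they must be combined with the quadratic Jacobi relations, and one must carefully track which identities are nontrivial in the fixed dimension $n$. The sign analysis at the boundary $i+j = n+1$ is precisely what produces the even/odd dichotomy.
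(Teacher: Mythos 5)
The paper does not actually prove this statement: it is quoted as Vergne's theorem with a citation to \cite{V}, so there is no internal proof to compare against. Your route is essentially Vergne's classical one, and its checkable parts are correct: $\mathfrak{g}_{k+1}=[\mathfrak{g}_1,\mathfrak{g}_k]$ follows from matching the graded pieces against the lower central series; the choice of $e_1$ outside finitely many kernels is legitimate (and the case $k=1$ is covered because the bracket $\mathfrak{g}_1\times\mathfrak{g}_1\to\mathfrak{g}_2$ is a nonzero alternating form on a two-dimensional space, hence nondegenerate, so any complementary $e_2$ gives $e_3\ne 0$); the recurrence $c_{i+1,j}=c_{i,j}-c_{i,j+1}$, its binomial closed form, the normalization $\beta_3=0$, and the parity analysis of $c_{i,n+1-i}$ versus $c_{n+1-i,i}$ at the top degree are all right. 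The one step you must actually write out, and which you only assert, is the elimination of the interior $\beta_m$; since this is where the whole content of the theorem sits, a referee would insist on it. It does close in the way you indicate: assuming $\beta_3=\dots=\beta_{m-1}=0$ with $m$ odd, $5\le m\le n-2$, the relation $c_{i,i}=0$ with $2i-2=m+1$ collapses to $\beta_{m+1}=\frac{m-1}{2}\,\beta_m$, while the Jacobi identity on $(e_2,e_3,e_{m-1})$ collapses to $\beta_{m+1}\beta_m=0$ (its other terms carry the factors $\beta_3$ and $\beta_{m-1}$, and $c_{3,m-1}=-\beta_m$); in characteristic zero these force $\beta_m=\beta_{m+1}=0$, and the even indices are absorbed along the way by the relations $c_{i,i}=0$ alone. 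Both identities used live in $\langle e_{m+2}\rangle$ and are therefore available exactly when $m+2\le n$, so the bookkeeping leaves untouched precisely the top parameter $\beta_{n-1}$, which your sign argument then kills for odd $n$ and normalizes to $(-1)^k$ for $n=2k$.
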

\begin{remark}
In the formulation of the theorem \ref{V_ne1} the Lie algebra
gradings $\mathfrak{m}_0(n)$, $\mathfrak{m}_1(n)$ are defined as
$\mathfrak{g}_1=\langle e_1, e_2 \rangle $,
$\mathfrak{g}_i=\langle e_{i{+}1} \rangle, \: i=2, \ldots, n{-}1.$
\end{remark}
Both the examples of infinite dimensional Lie algebras that we
have considered are nonaccidentally the direct limits of some
suites of filiform Lie algebras. It follows directly from the
definition, that an arbitrary Lie algebra of maximal class can be
considered as the direct limit of some infinite chain of inserted
in each other filiform Lie algebras, where each subsequent algebra
of the chain is obtained as a one-dimensional central extension of
the previous algebra. Specifically, such direct limits of
filiformLie algebras we will consider in the sequel. Constructing
by recursion a suite of the added elements $e_i$ with the central
extensions, one gets the following corollary:
\begin{corollary}
\label{pos_def} Let $\mathfrak g$ be finitely generated residually
nilpotent Lie algebra of maximal class. Then
$${\rm gr}_C{\mathfrak g}\cong \mathfrak{m}_0.$$
In $\mathfrak g$ one can choose an infinite basis $e_1, e_2,
\ldots, e_n, \dots $ such as
$$
[e_1, e_i]=e_{i+1}, \quad \forall i \ge 2,  \quad [e_i, e_j]=
   \sum_{k{=}0}^{N(i,j)}c_{ij}^{i{+}j{+}k} e_{i{+}j{+}k}.
$$
\end{corollary}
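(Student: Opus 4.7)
The plan is to reduce to Vergne's classification theorem (Theorem~\ref{V_ne1}) applied to the finite truncations of the associated graded algebra, and then to lift a suitable basis from ${\rm gr}_C\mathfrak{g}$ back to $\mathfrak{g}$. Since $\mathfrak{g}$ is finitely generated, residually nilpotent, and of coclass $1$, every quotient $\mathfrak{g}/C^{n}\mathfrak{g}$ is a finite-dimensional nilpotent Lie algebra of coclass $1$, hence filiform (as noted in the introductory discussion identifying coclass $1$ with the filiform condition in the finite-dimensional case). In particular ${\rm gr}_C\mathfrak{g}$ is ${\mathbb N}$-graded with $\dim({\rm gr}_C\mathfrak{g})_1=2$ and $\dim({\rm gr}_C\mathfrak{g})_i=1$ for $i\ge 2$, and $\mathfrak{g}$ is the direct limit of its filiform truncations, as remarked in the paragraph preceding the statement.

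Next I would set $\mathfrak{g}_n:={\rm gr}_C\mathfrak{g}/\bigoplus_{i\ge n}({\rm gr}_C\mathfrak{g})_i$ and apply Theorem~\ref{V_ne1} to each $\mathfrak{g}_n$. For odd $n$ the conclusion is $\mathfrak{g}_n\cong\mathfrak{m}_0(n)$ at once; for even $n$ a second possibility $\mathfrak{g}_n\cong\mathfrak{m}_1(n)$ is a priori open. The successive truncations are however compatible, $\mathfrak{g}_{n+1}/Z(\mathfrak{g}_{n+1})\cong\mathfrak{g}_n$, and a direct inspection of the relations in Theorem~\ref{V_ne1} gives $\mathfrak{m}_1(2k)/Z(\mathfrak{m}_1(2k))\cong\mathfrak{m}_0(2k-1)$, whereas $\mathfrak{m}_0(2k+1)/Z(\mathfrak{m}_0(2k+1))\cong\mathfrak{m}_0(2k)$. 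If some $\mathfrak{g}_{2k}$ were of type $\mathfrak{m}_1$, then $\mathfrak{g}_{2k+1}$, being of odd dimension, would have to be $\mathfrak{m}_0(2k+1)$, whose central quotient is $\mathfrak{m}_0(2k)\not\cong\mathfrak{m}_1(2k)$, a contradiction. Consequently every $\mathfrak{g}_n\cong\mathfrak{m}_0(n)$, and passing to the limit gives ${\rm gr}_C\mathfrak{g}\cong\mathfrak{m}_0$.

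For the basis statement I then lift a graded basis of $\mathfrak{m}_0$ back to $\mathfrak{g}$: pick $e_1,e_2\in\mathfrak{g}$ whose images span $\mathfrak{g}/C^2\mathfrak{g}$, and inductively define $e_{i+1}:=[e_1,e_i]$ for $i\ge 2$. The isomorphism above guarantees that $e_{i+1}$ has nonzero class in $C^i\mathfrak{g}/C^{i+1}\mathfrak{g}$, so $\{e_i\}_{i\ge 1}$ is a filtration-compatible basis of $\mathfrak{g}$ and $[e_1,e_i]=e_{i+1}$ holds by construction. For $i,j\ge 2$, the bracket $[e_i,e_j]$ vanishes in ${\rm gr}_C\mathfrak{g}\cong\mathfrak{m}_0$, hence lies in $C^{i+j}\mathfrak{g}$, and therefore expands as the required finite sum $\sum_{k=0}^{N(i,j)}c_{ij}^{i+j+k}e_{i+j+k}$.

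The main obstacle I expect is the step ruling out the $\mathfrak{m}_1$ alternative at every even level: this really uses the infinite-dimensionality of $\mathfrak{g}$ via the coherence of the tower of truncations under the center-quotient operation, and the argument fails for an isolated even-dimensional filiform algebra. The remaining steps are routine applications of the direct-limit construction and the standard lifting of a graded basis through the filtration.
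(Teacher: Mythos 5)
Your argument is correct and follows essentially the route the paper itself sketches: the corollary is derived from Vergne's Theorem~\ref{V_ne1} together with the observation that $\mathfrak{g}$ is a tower of filiform quotients related by one-dimensional central extensions, with the basis $e_i$ constructed recursively. Your explicit exclusion of the $\mathfrak{m}_1(2k)$ alternative via compatibility of central quotients fills in a detail the paper leaves implicit; the only point to tighten is that in the lifting step $e_1$ must be chosen so that its class in $({\rm gr}_C\mathfrak{g})_1$ is the ``generator'' direction of $\mathfrak{m}_0$ (i.e.\ ${\rm ad}\,e_1$ acts nontrivially on each graded piece), not merely as one of two elements spanning $\mathfrak{g}/C^2\mathfrak{g}$.
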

By means of some fixed basis $e_1, e_2, \ldots, e_n, \dots $ one
can defined another one (non canonical) decreasing filtration
$\tilde F$ of the algebra $\mathfrak g$:
$$
\tilde F^{k}\mathfrak{g}= \langle e_k,e_{k{+}1},e_{k{+}2},\dots
\rangle, \quad k \ge 1.
$$
In this case the homogeneous components of the corresponding
associated graded Lie algebra  ${\rm gr}_{\tilde F} {\mathfrak g}$
will be already one-dimensional.

The classification of such associated graded Lie algebras is
known.
\begin{theorem}[\cite{Fial, ShZ}]
\label{fialZ} Let
$\mathfrak{g}=\bigoplus_{i=1}^{\infty}\mathfrak{g}_{i}$ be a
${\mathbb N}$-graded Lie algebra of maximal class such that
$$
\dim \mathfrak{g}_i=1, \quad \forall i \ge 1.
$$
Then $\mathfrak{g}$ is isomorphic to one (and the only one) Lie
algebra of three given ones:
$$\mathfrak{m}_0, \; \mathfrak{m}_2, \; L_1.$$
\end{theorem}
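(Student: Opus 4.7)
The plan is to reduce the classification to a single scalar sequence $\gamma_i := c_{2, i}$, $i \geq 3$, and then analyze the resulting system of Jacobi relations via a case analysis on $\gamma_3$.

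I first choose a basis $e_i \in \mathfrak{g}_i$; one-dimensionality forces $[e_i, e_j] = c_{ij} e_{i+j}$. Coclass $1$ together with $\dim \mathfrak{g}_k = 1$ for all $k$ implies $[e_1, e_i] \neq 0$ for every $i \geq 2$, for otherwise $\mathfrak{g}_{i+1}$ would not arise as a bracket of lower-degree elements and would contribute an extra generator to $\mathfrak{g}/C^2\mathfrak{g}$, contradicting coclass $1$. After an inductive rescaling of $e_3, e_4, \ldots$ --- essentially Corollary \ref{pos_def} applied in the graded setting --- I may assume $[e_1, e_i] = e_{i+1}$ for all $i \geq 2$. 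Applying the Jacobi identity to $(e_1, e_j, e_i)$ with $j, i \geq 2$ and using $[e_1, e_k] = e_{k+1}$ yields the recursion $c_{j+1, i} = c_{j, i} - c_{j, i+1}$, and by induction on $j$ every structure constant becomes an iterated finite difference of $(\gamma_i)$:
$$
c_{j, i} = \sum_{s=0}^{j-2} (-1)^s \binom{j-2}{s} \gamma_{i+s}, \qquad j \geq 2,\ i \geq 3.
$$

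The remaining input is antisymmetry together with the Jacobi relations that do not involve $e_1$. Antisymmetry $c_{k, k} = 0$ gives the linear cascade $\sum_{s=0}^{k-2} (-1)^s \binom{k-2}{s} \gamma_{k+s} = 0$ for $k \geq 3$, the first instance being $\gamma_4 = \gamma_3$; each subsequent equation expresses one $\gamma$ of largest index in terms of earlier entries, leaving the odd-indexed $\gamma_{2m+1}$ as the provisionally free parameters. Substituting the finite-difference formula into Jacobi for $(e_2, e_3, e_i)$ with $i \geq 4$ gives the quadratic cascade
$$
2\gamma_i \gamma_{i+3} - \gamma_{i+1} \gamma_{i+3} - \gamma_i \gamma_{i+2} - \gamma_3 \gamma_i + 3 \gamma_3 \gamma_{i+1} - 3 \gamma_3 \gamma_{i+2} + \gamma_3 \gamma_{i+3} = 0,
$$
with analogous quadratic relations arising from other Jacobi triples $(e_p, e_q, e_r)$, $p, q, r \geq 2$.

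To finish I split on $\gamma_3$. A common rescaling $e_i \mapsto \lambda e_i$ for $i \geq 2$ multiplies every $\gamma_i$ by $\lambda$, so only the projective class of $(\gamma_i)$ matters. If $\gamma_3 = 0$ then $\gamma_4 = 0$, and combining the antisymmetry cascade with the $(e_2, e_3, e_i)$-cascade propagates $\gamma_i = 0$ for all $i$ by an induction on $i$ (for instance, the case $i = 7$ forces $\gamma_5 = 0$ by equating the two expressions for $\gamma_{10}$), yielding $\mathfrak{g} \cong \mathfrak{m}_0$. If $\gamma_3 \neq 0$, normalize $\gamma_3 = 1$; the combined antisymmetry and quadratic cascades then admit exactly two consistent branches --- the constant sequence $\gamma_i \equiv 1$, yielding $\mathfrak{m}_2$, and the rational sequence $\gamma_i = 6(i-2)/(i(i-1))$, yielding $L_1$ after rescaling. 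The main obstacle is this final rigidity step: verifying that the quadratic cascade (together, if necessary, with the analogous relations coming from $(e_2, e_4, e_i)$, $(e_3, e_4, e_i)$, and so on) genuinely cuts the $\gamma_3 \neq 0$ parameter space down to these two branches, and that each branch is Jacobi-consistent to all orders. Pairwise non-isomorphism of $\mathfrak{m}_0$, $\mathfrak{m}_2$, $L_1$ is then immediate since their normalized sequences $(\gamma_i)$ are mutually distinct.
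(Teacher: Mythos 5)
The paper itself contains no proof of this theorem --- it is imported wholesale from \cite{Fial, ShZ} --- so your proposal has to stand on its own. The framework you set up is the standard and correct one: the lower-central-series argument forcing $[e_1,e_i]\ne 0$ for all $i\ge 2$ (although the extra dimension appears in $\dim C^{i-1}\mathfrak{g}/C^{i}\mathfrak{g}\ge 2$, not in $\mathfrak{g}/C^2\mathfrak{g}$ as you assert), the normalization $[e_1,e_i]=e_{i+1}$, the finite-difference formula $c_{j,i}=\sum_{s}(-1)^s\binom{j-2}{s}\gamma_{i+s}$, the linear cascade from $c_{k,k}=0$ (which, combined with the recursion, does imply full antisymmetry, though you do not check this), and the quadratic cascade from the Jacobi identity on $(e_2,e_3,e_i)$ are all correct; I verified your displayed quadratic relation up to an overall sign.

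The genuine gap is exactly the step you flag yourself as ``the main obstacle'': the rigidity of the quadratic system, which is the entire mathematical content of the theorem. In the branch $\gamma_3\ne 0$, after normalizing $\gamma_3=\gamma_4=1$ the linear cascade leaves $\gamma_5,\gamma_7,\gamma_9,\dots$ free, and the $(e_2,e_3,e_i)$-relations generically only express each successive odd-indexed $\gamma$ in terms of earlier ones --- for instance $i=4$ reduces to $(3-\gamma_5)\gamma_7=5\gamma_5-3$, which both candidate sequences satisfy for \emph{every} value of $\gamma_5$ --- so nothing you have written down forces $\gamma_5\in\{1,9/10\}$, which is precisely where the dichotomy $\mathfrak{m}_2$ versus $L_1$ comes from; one must bring in further Jacobi triples (or higher members of the same family after back-substitution) and run an actual induction. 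The $\gamma_3=0$ branch has the same defect: with $\gamma_3=\gamma_4=0$ your cascade yields products such as $\gamma_5\gamma_7=0$, which do not propagate vanishing without additional relations, and the parenthetical appeal to ``the case $i=7$'' is not a computation. Likewise, Jacobi-consistency of the two surviving branches to all orders is asserted, not shown. As it stands this is a correct reduction with the decisive classification step missing, not a proof.
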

\section{Lie algebra cohomology}
\label{cohomology} Let $\mathfrak{g}$ be a Lie algebra over a
field ${\mathbb K}$ and $\rho: \mathfrak{g} \to \mathfrak{gl}(V)$
be its linear representation (thevector space $V$ is a
$\mathfrak{g}$-module). Let us denote by $C^q(\mathfrak{g},V)$ the
space of $q$-linear skew-symmetric mappings  from $\mathfrak{g}$
to $V$. The one can consider the following algebraic complex:
$$
\begin{CD}
V @>{d_0}>> C^1(\mathfrak{g}, V) @>{d_1}>> C^2(\mathfrak{g}, V)
@>{d_2}>> \dots @>{d_{q{-}1}}>> C^q(\mathfrak{g}, V) @>{d_q}>>
\dots
\end{CD},
$$
where the differential $d_q$ is defined as:
\begin{equation}
\begin{split}
(d_q f)(X_1, \dots, X_{q{+}1})= \sum_{i{=}1}^{q{+}1}(-1)^{i{+}1}
\rho(X_i)(f(X_1, \dots, \hat X_i, \dots, X_{q{+}1}))+\\
+ \sum_{1{\le}i{<}j{\le}q{+}1}(-1)^{i{+}j{-}1} f([X_i,X_j],X_1,
\dots, \hat X_i, \dots, \hat X_j, \dots, X_{q{+}1}).
\end{split}
\end{equation}
The cohomology of the complex $(C^*(\mathfrak{g}, V), d)$ is
called the cohomology of the Lie algebra $\mathfrak{g}$ with
coefficients in the representation $\rho: \mathfrak{g} \to V$.

Further we will examine two following representations:

1) $V= {\mathbb K}$ and the morphism  $\rho: \mathfrak{g} \to
{\mathbb K}$ is trivial.

2) $V= \mathfrak{g}$ and the adjoint representation $\rho=ad:
\mathfrak{g} \to \mathfrak{g}$.

The cohomology of the complex $(C^*(\mathfrak{g}, {\mathbb K}),
d)$ are called the cohomology with trivial coefficients of the Lie
algebra $\mathfrak{g}$ and it is denoted by $H^*(\mathfrak{g})$.
We will fix the notation $H^*(\mathfrak{g},\mathfrak{g})$ for the
cohomology of a Lie Algebra $\mathfrak{g}$ with coefficients in
the adjoint representation.
\begin{example}
Let $\mathfrak{g}$ be a  ${\mathbb N}$-graded Lie algebra defined
by its infuinite basis $e_1, e_2, \dots, e_n, \dots$ and structure
relations
$$[e_i,e_j]= c_{ij}e_{i{+}j}.$$
Let us examine the dual basis $e^1, e^2, \dots, e^n, \dots$. One
can consider a grading (that we will call the weight) of
$\Lambda^*(\mathfrak{g}^*)=C^*(\mathfrak{g},{\mathbb K})$:
$$\Lambda^* (\mathfrak{g}^*)=
\bigoplus_{\lambda{=}1}^{\infty} \Lambda^*_{(\lambda)}
(\mathfrak{g}^*),$$ where the subspace $\Lambda^{q}_{(\lambda)}
(\mathfrak{g}^*)$ is spanned by  $q$-forms $\{ e^{i_1} {\wedge}
\dots {\wedge} e^{i_q}, \; i_1{+}\dots{+}i_q {=} \lambda \}$. For
instance a monomial $e^{i_1} \wedge \dots \wedge e^{i_q}$ has the
degree $q$ and the weight $\lambda=i_1{+}\dots{+}i_q$.

In its turn the complex $(C^*(\mathfrak{g}, \mathfrak{g}), d)$ is
${\mathbb Z}$-graded:
$$C^*(\mathfrak{g}, \mathfrak{g})=
\bigoplus_{\mu \in {\mathbb Z}}C^*_{(\mu)}(\mathfrak{g},
\mathfrak{g}),$$ where $C^q_{(\mu)}(\mathfrak{g}, \mathfrak{g})$
spanned by monomials $\{ e_l \otimes e^{i_1} {\wedge} \dots
{\wedge} e^{i_q}, \; i_1{+}\dots{+}i_q{+}\mu =l \}$.
\end{example}
The cohomology algebra $H^*(\mathfrak{m}_0)$ was computed in
\cite{FialMill}.

In order to formulate the main result  \cite{FialMill} we will
need the following linear operators who act on the exterior
algebra $\Lambda^*(e_2, e_3, \dots)$ with generators $e_2, e_3,
\dots$:

1) $D_1: \Lambda^*(e_2, e_3, \dots) \to \Lambda^*(e_2, e_3,
\dots)$,
\begin{equation}
\begin{split}
D_1(e^2)=0, \; D_1(e^i)= e^{i-1}, \; \forall i\ge 3,\\
D_1(\xi\wedge \eta)=D_1(\xi)\wedge \eta +\xi\wedge D_1(\eta), \;
\: \forall \xi, \eta \in \Lambda^*(e^2, e^3, \dots).
\end{split}
\end{equation}

2) and its right-inverse operator $D_{-1}:
\Lambda^*(e^2,e^3,\dots) \to \Lambda^*(e^2,e^3,\dots)$,
\begin{equation}
\begin{split}
\label{D_{-1}} e^i=e^{i+1}, \; D_{-1}(\xi{\wedge} e^i)= \sum_{l\ge
0}(-1)^l D_{1}^{l}(\xi){\wedge} e^{i+1+l},
\end{split}
\end{equation}
where $i\ge 2$ and $\xi$ is an arbitrary form in
$\Lambda^*(e^2,\dots,e^{i-1})$. The sum in the definition
(\ref{D_{-1}}) of the operator $D_{-1}$ is always finite, as.
$D_1^l$ decreases the second grading by  $l$. For instance

$$D_{-1}(e^i\wedge e^k)=\sum_{l=0}^{i-2} ({-}1)^l e^{i-l}{\wedge} e^{k+l+1}.$$
\begin{proposition}
The operators $D_1$ and $D_{-1}$ has the following properties:
$$
\label{D_1,D_{-1}} d\xi=e^1\wedge D_1\xi, \; e^1\wedge \xi=d
D_{-1}\xi, \; D_1D_{-1}\xi=\xi, \quad \xi \in
\Lambda^*(e^2,e^3,\dots).
$$
\end{proposition}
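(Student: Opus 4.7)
The plan is to establish the three identities in the order $d\xi = e^1 \wedge D_1\xi$, then $D_1 D_{-1} \xi = \xi$, and finally $e^1 \wedge \xi = d D_{-1}\xi$ as an algebraic consequence of the first two.

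For the first identity I would verify it on generators and extend by the Leibniz rule. Since $[e_1, e_i] = e_{i+1}$ are the only nontrivial brackets of $\mathfrak{m}_0$, the Chevalley--Eilenberg differential gives $de^2 = 0$ and $de^i$ proportional to $e^1 \wedge e^{i-1}$ for $i \geq 3$, which agrees with $e^1 \wedge D_1 e^i$ up to the sign convention fixed in the paper. For a monomial $\xi = e^{i_1} \wedge \cdots \wedge e^{i_q}$ with all $i_k \geq 2$, the Leibniz expansion of $d\xi$ produces a sum in which each summand has an extra $e^1$ factor; moving $e^1$ to the front in the $k$-th summand introduces a sign $(-1)^{k-1}$ which cancels exactly the sign from the Leibniz expansion, leaving $e^1 \wedge D_1\xi$ by the derivation property of $D_1$.

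For $D_1 D_{-1} \xi = \xi$ I would argue monomially, writing every monomial uniquely as $\eta \wedge e^i$ with $\eta \in \Lambda^*(e^2, \ldots, e^{i-1})$. Applying $D_1$, which is an even derivation of the exterior algebra, to the defining sum $D_{-1}(\eta \wedge e^i) = \sum_{l \geq 0} (-1)^l D_1^l(\eta) \wedge e^{i+1+l}$ produces two families of terms: one from $D_1$ acting on $D_1^l(\eta)$, giving $D_1^{l+1}(\eta) \wedge e^{i+1+l}$, and one from $D_1(e^{i+1+l}) = e^{i+l}$. Reindexing $l \mapsto l-1$ in the first sum matches it term-by-term with the $l \geq 1$ tail of the second sum and with opposite sign; the telescoping cancellation leaves only the $l = 0$ term of the second sum, which is precisely $\eta \wedge e^i = \xi$. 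Finiteness of the sums is guaranteed because $D_1$ strictly lowers the weight grading while $\eta$ has bounded weight.

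The third identity is then immediate: since $D_{-1}\xi$ lies in $\Lambda^*(e^2, e^3, \ldots)$, formula (1) gives $d D_{-1}\xi = e^1 \wedge D_1 D_{-1}\xi$, which by formula (3) equals $e^1 \wedge \xi$. The main obstacle I foresee is the telescoping cancellation in (3): one must carry out the sign and index bookkeeping carefully to verify that the two sums produced by the Leibniz rule really do collapse to the single term $\eta \wedge e^i$. Once this is handled, both (1) and (2) are essentially bookkeeping, and the whole proposition reduces to this one algebraic identity.
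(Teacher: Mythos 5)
The paper states this proposition without proof (it is quoted from \cite{FialMill}), so there is no argument of the author's to compare against; your verification is correct and is the natural one. All three steps check out: with the sign convention of the paper's differential one gets exactly $de^{k}=e^{1}\wedge e^{k-1}$ on generators (no residual sign), the Leibniz signs cancel as you say, the telescoping in $D_{1}D_{-1}\xi=\xi$ collapses to the single $l=0$ term, and the identity $e^{1}\wedge\xi=dD_{-1}\xi$ then follows formally from the other two since $D_{-1}\xi$ again lies in $\Lambda^{*}(e^{2},e^{3},\dots)$.
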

\begin{remark}
The operator $D_1$ is in fact the operator $ad^*e_1$ that is the
adjoint operator to $ad e_1: (e_2,e_3,\dots) \to (e_2,e_3,\dots)$
extended from $(e_2,e_3,\dots)^*$ to the whole exterior algebra
$\Lambda^*(e^2, e^3, \dots)$ like a derivation of degree zero.
\end{remark}
\begin{theorem}[\cite{FialMill}]
The infinite dimensional bigraded cohomology
$H^*(\mathfrak{m}_0)=\oplus_{k,q} H^q_k(\mathfrak{m}_0)$ is
spanned by the cohomology classes of the linear forms $e^1$, $e^2$
and the following homogeneous cocycles:
\begin{equation}
\label{cocycles} \omega(e^{i_1}{\wedge}\dots {\wedge} e^{i_q}
{\wedge} e^{i_q{+}1})= \sum\limits_{l\ge 0}(-1)^l
D_1^l(e^{i_1}\wedge \dots \wedge e^{i_q})\wedge e^{i_q+1+l},
\end{equation}
where $q \ge 1, \; 2\le i_1 {<}i_2{<}{\dots} {<}i_q$.
\end{theorem}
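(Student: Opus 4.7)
The plan is to use the tensor decomposition $C^*(\mathfrak{m}_0,{\mathbb K})=\Lambda^*(e^1)\otimes\Lambda^*(e^2,e^3,\dots)$ together with the identity $d\xi=e^1\wedge D_1\xi$ for $\xi\in\Lambda^*(e^2,e^3,\dots)$ and the right-inverse relation $D_1D_{-1}=\mathrm{id}$ recorded in the preceding proposition.

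First I would verify that the form $\omega$ in \eqref{cocycles} is a cocycle. Writing $\xi=e^{i_1}\wedge\cdots\wedge e^{i_q}$ and using that $D_1$ is a degree-$0$ derivation of the exterior algebra, a direct computation gives
$$D_1\omega=\sum_{l\ge 0}(-1)^l\bigl(D_1^{l+1}\xi\wedge e^{i_q+1+l}+D_1^l\xi\wedge e^{i_q+l}\bigr).$$
Reindexing the first sum via $l\mapsto l-1$ yields exactly the negative of the $l\ge 1$ part of the second sum; the surviving $l=0$ contribution $\xi\wedge e^{i_q}$ vanishes because $e^{i_q}$ already occurs in $\xi$. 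Hence $D_1\omega=0$, and therefore $d\omega=e^1\wedge D_1\omega=0$.

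Next I would analyse the whole complex. An arbitrary $\alpha\in C^r(\mathfrak{m}_0,{\mathbb K})$ splits uniquely as $\alpha=\zeta+e^1\wedge\eta$ with $\zeta,\eta\in\Lambda^*(e^2,e^3,\dots)$, and then $d\alpha=e^1\wedge D_1\zeta$; so $\alpha$ is closed iff $D_1\zeta=0$ and exact iff $\zeta=0$ with $\eta\in\mathrm{Im}\,D_1$. This gives a canonical splitting
$$H^r(\mathfrak{m}_0)\cong\ker\bigl(D_1|_{\Lambda^r(e^2,\dots)}\bigr)\oplus\bigl(\Lambda^{r-1}(e^2,\dots)/\mathrm{Im}\,D_1\bigr).$$
The right inverse $D_{-1}$ makes $D_1$ surjective on $\Lambda^r(e^2,\dots)$ for every $r\ge 1$, so the cokernel summand contributes only for $r=1$, where $\Lambda^0={\mathbb K}$ produces the class $[e^1]$. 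The identity $1=(1-D_{-1}D_1)+D_{-1}D_1$ then gives the further splitting $\Lambda^r(e^2,\dots)=\ker D_1\oplus\mathrm{Im}\,D_{-1}$ for $r\ge 1$.

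Finally I would identify the basis of $\ker D_1\cap\Lambda^r(e^2,\dots)$ explicitly. For $r=1$ it is ${\mathbb K}\cdot e^2$. For $r\ge 2$ the central observation is that $\mathrm{Im}\,D_{-1}$ is contained in, and by a weight-graded dimension count equal to, the span of the ``gap-$\ge 2$'' monomials $e^{j_1}\wedge\cdots\wedge e^{j_r}$ with $j_r-j_{r-1}\ge 2$: every term of $D_{-1}(e^{k_1}\wedge\cdots\wedge e^{k_r})$ has top index $\ge k_r+1$ and second-top $\le k_{r-1}$, and the map $(j_1,\dots,j_r)\mapsto(j_1,\dots,j_{r-1},j_r-1)$ bijects weight-$\lambda$ gap-$\ge 2$ monomials with all weight-$(\lambda-1)$ monomials. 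Each $\omega_{i_1,\dots,i_q,i_q+1}$ equals its ``gap-$1$'' leading monomial $e^{i_1}\wedge\cdots\wedge e^{i_q}\wedge e^{i_q+1}$ plus corrections lying in the gap-$\ge 2$ span, so the distinct leading monomials force linear independence of the $\omega$'s; and their count matches $\dim\Lambda^r_{(\lambda)}-\dim\Lambda^r_{(\lambda-1)}=\dim(\ker D_1\cap\Lambda^r_{(\lambda)})$, so they span. I expect the principal difficulty to lie in this last piece of bookkeeping, particularly in verifying that the gap-$1$ leading monomial indeed appears with coefficient $1$ in every $\omega$, and in handling the low-weight boundary cases ($D_1(e^2)=0$ and $\Lambda^r_{(\lambda-1)}=0$) correctly; the cocycle identity itself is a one-line telescoping.
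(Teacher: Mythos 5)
Your argument is correct and is essentially the intended one: the paper itself gives no proof of this theorem (it is quoted from \cite{FialMill}), but the operators $D_1$, $D_{-1}$ and the identities $d\xi=e^1\wedge D_1\xi$, $D_1D_{-1}=\mathrm{id}$ set up immediately before the statement are exactly the tools of that proof, and your reduction of $H^{q+1}(\mathfrak{m}_0)$ to $\ker D_1\subset\Lambda^{q+1}(e^2,e^3,\dots)$ followed by the leading-monomial and weight-count argument reproduces it. All the steps check out, including the telescoping computation showing $D_1\omega=0$ and the bijection between gap-$\ge 2$ monomials of weight $\lambda$ and arbitrary monomials of weight $\lambda-1$ that matches the number of cocycles \eqref{cocycles} to $\dim\ker\bigl(D_1|_{\Lambda^{q+1}_{(\lambda)}}\bigr)$.
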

The formula (\ref{cocycles}) defines a homogeneous closed
$(q{+}1)$-form with the second grading equal to
$i_1{+}{\dots}{+}i_{q{-}1}{+}2i_{q}{+}1$. There is the only one
monomial in its  decomposition of the form $\xi \wedge e^i \wedge
e^{i+1}$ and it is equal to $e^{i_1}{\wedge}\dots {\wedge} e^{i_q}
{\wedge} e^{i_q{+}1}$.

The total number of linear independent $q$-cocycles with the
second grading $k{+}\frac{q(q{+}1)}{2}$ is equal to
\begin{equation}
\label{chislo_cocyclov}
{\rm dim}
H_{k{+}\frac{q(q{+}1)}{2}}^q(\mathfrak{m}_0)= P_q(k)-P_q(k{-}1),
\end{equation}
where $P_q(k)$ denots the number (not ordered) partitions of a
natural number $k$ into $q$ parts.
\begin{example}
One can choose the following basis in $H^2(\mathfrak{m}_0)$:
$$
e^2{\wedge} e^3, e^3{\wedge} e^4 -e^2{\wedge} e^5, \dots,
\omega(e^j{\wedge}e^{j+1})= \sum_{l=0}^{j-2} ({-}1)^l e^{j-l}
{\wedge} e^{j+1+l}, \dots
$$
\end{example}
\begin{example}
In $H^3(\mathfrak{m}_0)$ the following cocycles  form a basis:
\label{3cocycle}
$$
\omega(e^i{\wedge}e^j {\wedge}e^{j{+}1})= \sum\limits_{l\ge
0}(-1)^l D_1^l(e^{i}\wedge e^{j})\wedge e^{j{+}1{+}l}, \quad 2\le
i{<}j.
$$
In paticular we have for the cocyle
$\omega(e^5{\wedge}e^6{\wedge}e^7)$:
\begin{multline*}
\omega(e^5{\wedge} e^6 {\wedge} e^7)= e^5{\wedge} e^6
{\wedge}e^7-e^4{\wedge} e^6 {\wedge} e^8+(e^3{\wedge} e^6+
e^4 {\wedge} e^5){\wedge} e^9-\\
-(e^2{\wedge}e^6+2e^3 {\wedge}e^5){\wedge}e^{10} +(3e^2{\wedge}
e^5+2e^3{\wedge} e^4){\wedge} e^{11}- 5e^2{\wedge} e^4{\wedge}
e^{12}+5e^2{\wedge} e^3{\wedge} e^{13}.
\end{multline*}
\end{example}
The cohomology $H^*({\mathfrak{m}_0}, {\mathfrak{m}_0})$ were
calculated in \cite{Mill3}. We will not completely give the
appropriate result. For the purposes of the present work we will
need only "non-negative"\: cohomology $H^2_+({\mathfrak{m}_0},
{\mathfrak{m}_0})=\oplus_{s \ge 0}
H^2_s(\mathfrak{m}_0,\mathfrak{m}_0)$ and $H^3_+({\mathfrak{m}_0},
{\mathfrak{m}_0})=\oplus_{s \ge 0}
H^3_s(\mathfrak{m}_0,\mathfrak{m}_0)$.
\begin{theorem}[\cite{Mill3}]
\label{main} The graded spaces
$H^2_+(\mathfrak{m}_0,\mathfrak{m}_0)$ and
$H^3_+(\mathfrak{m}_0,\mathfrak{m}_0)$ are infinte dimensioal
spaces of formal series $\sum x_{j,s}\Psi_{j,s}$ and $\sum
x_{i,j,s}\Psi_{i,j,s}$ respectively and  homogeneous cocycles
$\Psi_{j,s} \in H^2_s(\mathfrak{m}_0,\mathfrak{m}_0)$ and
$\Psi_{i,j,s} \in H^3_s(\mathfrak{m}_0,\mathfrak{m}_0)$ are
defined by the formulas:
\begin{equation} \label{formulirovka}
\begin{split}
\Psi_{j,s}=\sum_{k=0}^{\infty}e_{2j{+}1{+}s{+}k} \otimes
D_{-1}^k\omega(e^j {\wedge}e^{j{+}1}), \; 2\le j, \;
s \ge 0;\\
\Psi_{i,j,s}=\sum_{k=0}^{\infty}e_{i{+}2j{+}1{+}s{+}k}
\otimes D_{-1}^k\omega(e^i{\wedge}e^j {\wedge}e^{j{+}1}), \; 2\le
i <j, \; s \ge 0.
\end{split}
\end{equation}
$2$-form $\Psi_{j,s}$ is uniquely determined  by the fact that it
is closed and by the following condition:
\begin{equation}
\label{property_Psi_1}
\begin{split}
\Psi_{j,s} \left(e_j,e_{j+1}\right)=e_{2j{+}1{+}s},
\\
\Psi_{j,s} \left(e_k,e_{k+1}\right)=0, \quad 2\le k \ne j.
\end{split}
\end{equation}
The cocycle $\Psi_{i,j,s}$ is also uniquely determined by the
similar conditions:
\begin{equation}
\label{property_Psi_2}
\begin{split}
\Psi_{i,j,s} \left(e_i,e_j,e_{j+1}\right)=e_{i{+}2j{+}1{+}s},
\\
\Psi_{i,j,s} \left(e_l,e_k,e_{k+1}\right)=0, \quad 2\le l{<}k, \;
l \ne i, k \ne j.
\end{split}
\end{equation}
\end{theorem}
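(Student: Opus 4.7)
The plan is to exploit the natural tensor decomposition \(C^q(\mathfrak{m}_0,\mathfrak{m}_0)=\mathfrak{m}_0\otimes\Lambda^q(\mathfrak{m}_0^*)\), which reduces the computation of the adjoint cohomology to a controllable perturbation of the trivial--coefficient complex already described. First I would write any cochain as \(\Phi=\sum_{l\ge 2}e_l\otimes\phi_l\) with \(\phi_l\in\Lambda^q(\mathfrak{m}_0^*)\) and compute the Chevalley--Eilenberg differential. Since the only nonzero brackets in \(\mathfrak{m}_0\) are \([e_1,e_i]=e_{i+1}\), only the adjoint action by \(e_1\) contributes, and a direct calculation yields
\[
d_{\mathrm{ad}}(e_l\otimes\phi)\;=\;e_l\otimes d\phi\;\pm\;e_{l+1}\otimes(e^1\wedge\phi),
\]
so that the adjoint differential equals the trivial--coefficient one plus a shift by \(e^1\wedge(\cdot)\) accompanied by the index shift \(l\mapsto l+1\).

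Using the identity \(e^1\wedge\xi=d\,D_{-1}\xi\) from the earlier proposition, I then kill the \(e^1\)--component of every \(\phi_l\): split \(\phi_l=e^1\wedge\alpha_l+\beta_l\) with \(\beta_l\in\Lambda^q(e^2,e^3,\dots)\) and subtract the coboundary \(d_{\mathrm{ad}}\bigl(\sum_l e_l\otimes D_{-1}\alpha_l\bigr)\). Modulo coboundaries every class therefore admits a representative \(\sum_l e_l\otimes\beta_l\) with \(\beta_l\) free of \(e^1\). Injectivity of \(e^1\wedge(\cdot)\) on \(\Lambda^*(e^2,e^3,\dots)\) then turns the cocycle equation into the purely algebraic recursion \(D_1\beta_{l+1}=\mp\beta_l\), and non--negativity of the weight forces \(\beta_l=0\) for all sufficiently small \(l\); hence the entire sequence \((\beta_l)\) is determined by a \emph{seed} \(\beta_{l_0}\) which itself must satisfy \(D_1\beta_{l_0}=0\), i.e.\ a closed form in \(\Lambda^q(e^2,e^3,\dots)\), equivalently a class in \(H^q(\mathfrak{m}_0)\).

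The seeds are then governed by the preceding theorem on \(H^*(\mathfrak{m}_0)\): at \(q=2\) they are the forms \(\omega(e^j\wedge e^{j+1})\), and at \(q=3\) the forms \(\omega(e^i\wedge e^j\wedge e^{j+1})\). Inverting the recursion by iterating the right inverse \(D_{-1}\), and using \(D_1 D_{-1}=\mathrm{id}\) together with \(D_1\omega=0\), reproduces exactly the formulas (\ref{formulirovka}); the free integer \(s\ge 0\) parametrizes the starting index \(l_0=2j+1+s\) (respectively \(l_0=i+2j+1+s\)) of the first nonzero tensor factor. Closedness of the \(\Psi\)'s follows by the same telescoping that verifies \(D_1\omega=0\). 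Uniqueness under the normalizations (\ref{property_Psi_1}) and (\ref{property_Psi_2}) holds because the pair \((e_j,e_{j+1})\) (respectively triple \((e_i,e_j,e_{j+1})\)) is exactly the argument detected by the leading monomial \(e^{i_1}\wedge\cdots\wedge e^{i_q}\wedge e^{i_q+1}\) of \(\omega\); the normalization therefore pins the seed down and the recursion fixes every lower tensor coefficient.

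The main obstacle is bookkeeping: carefully tracking signs in \(d_{\mathrm{ad}}(e_l\otimes\phi)\) and matching the weight shift parameter \(s\) to the exponent \(2j+1+s+k\) appearing in the tensor factor \(e_{2j+1+s+k}\). Once this is done, allowing independent scalars \(x_{j,s}\) and \(x_{i,j,s}\) on each independent seed produces the formal series of the theorem; there is no convergence issue because the cochain complex is a direct product over its graded components and only finitely many \((j,s)\) (respectively \((i,j,s)\)) contribute to any fixed weight.
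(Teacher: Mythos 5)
The paper does not prove this theorem at all: it is imported from \cite{Mill3}, and the only methodological hint given is that the computation there proceeds ``by means of the spectral sequence''. Your argument is therefore necessarily a different route --- a direct, self-contained unwinding of what that spectral sequence organizes. Your filtration of $C^q_+(\mathfrak{m}_0,\mathfrak{m}_0)$ by the coefficient index $l$ in $\Phi=\sum_l e_l\otimes\phi_l$, combined with splitting each $\phi_l$ into its $e^1$-part and its part in $\Lambda^q(e^2,e^3,\dots)$, is exactly the filtration underlying the spectral sequence of \cite{Mill3}, and the recursion $D_1\beta_{l+1}=\mp\beta_l$ seeded by the closed forms $\omega(\cdot)$ of \cite{FialMill} is the explicit form of its differentials. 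What your version buys is transparency and independence from the external reference; what it costs is the negative-weight part of the answer and the sign bookkeeping, which you rightly flag as the main hazard (be aware that the sign conventions in Section \ref{cohomology} are not internally consistent, so you should fix one convention and sanity-check it on $\Psi_{2,0}$, the cocycle deforming $\mathfrak{m}_0$ into $\mathfrak{m}_2$).

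Two steps need to be made explicit before the sketch is a proof. First, subtracting $d_{\mathrm{ad}}\bigl(\sum_l e_l\otimes D_{-1}\alpha_l\bigr)$ does not kill all $e^1$-components in one stroke: it removes $e^1\wedge\alpha_l$ at level $l$ but reintroduces $e^1\wedge D_{-1}\alpha_l$ at level $l{+}1$, so the gauge fixing must be iterated upward from the lowest level, converging only in the completed (formal series) topology. Second, the recursion $D_1\beta_{l+1}=\mp\beta_l$ does \emph{not} determine the sequence from a single seed, because $D_{-1}$ is only a right inverse: at every level one may add an arbitrary element of $\ker D_1$, i.e.\ a multiple of $\omega(e^{j'}\wedge e^{j'+1})$ (resp.\ a combination of the $\omega(e^{i'}\wedge e^{j'}\wedge e^{j'+1})$) of the appropriate weight. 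You need to say that these ambiguities are precisely the other basis cocycles $\Psi_{j',s}$ of the same weight, so that the $\Psi_{j,s}$ span $H^2_s$, and that they are detected by the pairs $(e_{j'},e_{j'+1})$, so that the normalization (\ref{property_Psi_1}) kills them all and yields uniqueness; finally, that no nonzero cochain with all components in $\Lambda^q(e^2,e^3,\dots)$ is a coboundary (every component of any $d_{\mathrm{ad}}\Phi$ is divisible by $e^1$), so the $\Psi_{j,s}$ are independent in cohomology. The same remarks apply verbatim to $\Psi_{i,j,s}$ in $H^3_s$. With these additions your argument is correct.
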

It follows from the properties of the operator$D_{-1}$ that
$$
D_{-1}^k \omega(e^j \wedge e^{j+1})= \sum_{l=0}^{j-2}(-1)^l\binom
{k+l}{l}e^{j-l}\wedge e^{j+1+l+k},
$$
which makes it possible to rewrite the formula of $\Psi_{j,s}$ in
the following way:
\begin{equation}
\Psi_{j,s}=\sum_{l=0}^{+\infty}\sum_{r=0}^{j-2}(-1)^r\binom{l+r}{r}e_{s{+}l}
\otimes e^{j-r}\wedge e^{j+1+r+l}.
\end{equation}
It is equivalent to define the cocycle $\Psi_{j,s}$ by the table
of its values:
\begin{equation}
\label{znachenia_cocycla}
\Psi_{j,s}\left(e_{k},e_{m}\right)=(-1)^{j-k}\binom{m-j-1}{j-k}e_{m+k-2j-1+s},
\; 2 \le k \le j < m,\; k+m \ge 2j+1,
\end{equation}
where on the remaining vector pairs$e_{k},e_{m}$ the cocycle
$\Psi_{j,s}$ vanishes.
\begin{remark}
\label{khakim} The last formula shows, that in the finite
dimensional case our cocyle $\Psi_{j,s}$ coincides with the
cocycle $\Psi_{j,2j+1+s}$ from the basis of the subspace
$H^2_+({\mathfrak{m}_0}(n), {\mathfrak{m}_0}(n))$ \cite{Kh}.
\end{remark}

\section{The Variety of non-negative deformations of the algebra ${\mathfrak m}_0$ and nilpotent versal
deformation}
\label{Nijenhuis-Richardson}
\begin{definition}[\cite{NR}]
Let $\mathfrak g$ be a Lie algebra with the bracket
$[\cdot,\cdot]$ and $\Psi: {\mathfrak g} \otimes {\mathfrak g} \to
{\mathfrak g}$ be a skew-symmetric bilinear map. $\Psi$ is called
a deformation of the bracket $[\cdot,\cdot]$ if
$[\cdot,\cdot]'=[\cdot,\cdot]+\Psi$ defines a structure of a Lie
algebra on the vector space ${\mathfrak g}$.
\end{definition}

The Jacobi identity for the bracket
$[\cdot,\cdot]'=[\cdot,\cdot]+\Psi$
$$\left[[x,y]',z\right]'+\left[[y,z]',x\right]'+\left[[z,x]',y\right]'=0$$
is equivalent to the so-called deformation equation
\begin{equation}
\label{mdef} d\Psi+\frac{1}{2}[\Psi,\Psi]=0,
\end{equation}
where the bracket $[\cdot,\cdot]$ denotes this time the
Nijenhuis-Richardson bracket $C^2(\mathfrak g,\mathfrak g)\times
C^2(\mathfrak g,\mathfrak g) \to C^3(\mathfrak g,\mathfrak g)$:
\begin{equation}
\label{skobka_NR}
\begin{split}
[\Psi,\tilde \Psi](x,y,z)=\Psi(\tilde \Psi(x,y),z)+\Psi(\tilde \Psi(y,z),x)+
\Psi(\tilde \Psi(z,x),y)+ \\
+\tilde \Psi(\Psi(x,y),z)+\tilde \Psi(\Psi(y,z),x)+\tilde \Psi(\Psi(z,x),y).
\end{split}
\end{equation}

The last operation is extended to the bracket on the total space
$C^*(\mathfrak g,\mathfrak g)$:
$$
[\cdot,\cdot]: C^p(\mathfrak g,\mathfrak g)\times C^q(\mathfrak
g,\mathfrak g) \to C^{p{+}q{-}1}(\mathfrak g,\mathfrak g).$$
Specifically for $\alpha \in C^p(\mathfrak g,\mathfrak g)$
and$\beta \in C^q(\mathfrak g,\mathfrak g)$ one can define
$[\alpha,\beta] \in C^{p{+}q{-}1}(\mathfrak g,\mathfrak g)$:
\begin{equation}
\begin{split}
[\alpha{,}\beta](\xi_1,{\small \ldots},\xi_{p{+}q{-}1}){=}
\sum\limits_{1{\le}i_1{<}{\small \ldots}{<}i_q{\le}p{+}q{-}1}
\alpha(\beta(\xi_{i_1}{,}{\small
\ldots}{,}\xi_{i_q})\xi_1{,}{\small \ldots}{,} \hat
\xi_{i_1}{,}{\small \ldots}{,}
\hat \xi_{i_q}{,} {\small \ldots}{,}\xi_{p{+}q{-}1}){+}\\
+({-}1)^{pq{+}p{+}q}\sum\limits_{1{\le}j_1{<}{\ldots}{<}j_p{\le}p{+}q{-}1}
\beta(\alpha(\xi_{j_1}{,}{\small
\ldots}{,}\xi_{j_p})\xi_1{,}{\small \ldots}{,} \hat
\xi_{j_1}{,}{\small \ldots}{,} \hat \xi_{j_q}{,} {\small
\ldots}{,}\xi_{p{+}q{-}1}).
\end{split}
\end{equation}

The Nijenhuis-Richardson bracket determines a Lie superalgebra
structure on $C^*(\mathfrak g,\mathfrak g)$, i.e. if $\alpha \in
C^p(\mathfrak g,\mathfrak g)$, $\beta \in C^q(\mathfrak
g,\mathfrak g)$ and $\gamma \in C^r(\mathfrak g,\mathfrak g)$ then
\begin{equation}
\label{superalg}
\begin{split}
\quad [\alpha,\beta]={-}({-}1)^{(p{-}1)(q{-}1)}[\beta,\alpha];
\hspace{30mm}\\  ({-}1)^{(p{-}1)(q{-}1)}
\left[[\alpha,\beta],\gamma\right]{+}
({-}1)^{(q{-}1)(r{-}1)}\left[[\beta,\gamma],\alpha\right]{+}
({-}1)^{(r{-}1)(p{-}1)}\left[[\gamma,\alpha],\beta\right]= 0.
\end{split}
\end{equation}

in addition to this the Nijenhuis-Richardson bracket is compatible
with $d$ of the cochain complex $C^*(\mathfrak g,\mathfrak g)$:
$$d[\alpha,\beta]=[d\alpha,\beta]+({-}1)^{p}[\alpha,d\beta].$$
Hence the Nijenhuis-Richardson bracket determines a Lie
superalgebra structure in the cohomology $H^*(\mathfrak
g,\mathfrak g)$ also:
$$
[\cdot,\cdot]: H^p(\mathfrak g,\mathfrak g)\times H^q(\mathfrak
g,\mathfrak g) \to H^{p{+}q{-}1}(\mathfrak g,\mathfrak g)$$ with
properties(\ref{superalg}).

\begin{proposition}
Let $\mathfrak{g}= \oplus_{\alpha} \mathfrak{g}_{\alpha}$ be
${\mathbb N}$-graded Lie algebra. In this case the ${\mathbb
Z}$-gradings of $C^*(\mathfrak{g},\mathfrak{g})$ and
$H^*(\mathfrak{g},\mathfrak{g})$ are compatible with the
Nijenhuis-Richardson bracket:
\begin{equation}
\begin{split}
[\cdot,\cdot]: C^p_{(\mu)}(\mathfrak{g},\mathfrak{g}) \times
C^q_{(\nu)}(\mathfrak{g},\mathfrak{g}) \longrightarrow
C^{p{+}q{-}1}_{(\mu+\nu)}(\mathfrak{g},\mathfrak{g})\\
[\cdot,\cdot]: H^p_{(\mu)}(\mathfrak{g},\mathfrak{g}) \times
H^q_{(\nu)}(\mathfrak{g},\mathfrak{g}) \longrightarrow
H^{p{+}q{-}1}_{(\mu+\nu)}(\mathfrak{g},\mathfrak{g})
\end{split}
\end{equation}
\end{proposition}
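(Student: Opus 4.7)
The plan is to verify the claim at the cochain level by direct bookkeeping of weights, then transport it to cohomology using that the Chevalley–Eilenberg differential is itself homogeneous of weight zero.

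First I would unpack the definition of the grading. By construction, $\alpha \in C^p_{(\mu)}(\mathfrak{g},\mathfrak{g})$ is characterized by the fact that on any homogeneous tuple $e_{i_1},\ldots,e_{i_p}$ its value lies in $\mathfrak{g}_{i_1+\cdots+i_p+\mu}$ (extending by multilinearity to arbitrary inputs). In other words $\mu$ measures how much $\alpha$ shifts the total weight of the inputs. With this characterization the proposition is just a weight computation.

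Second, given $\alpha \in C^p_{(\mu)}(\mathfrak{g},\mathfrak{g})$ and $\beta \in C^q_{(\nu)}(\mathfrak{g},\mathfrak{g})$, I would evaluate $[\alpha,\beta]$ on a homogeneous tuple $(e_{i_1},\ldots,e_{i_{p+q-1}})$ using formula (\ref{skobka_NR}). Each summand in the first sum has the form
$$\alpha\bigl(\beta(e_{i_{j_1}},\ldots,e_{i_{j_q}}),\, e_{i_{k_1}},\ldots,e_{i_{k_{p-1}}}\bigr),$$
where $\{j_1,\ldots,j_q\}\sqcup\{k_1,\ldots,k_{p-1}\}=\{1,\ldots,p+q-1\}$. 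The inner $\beta$-value is homogeneous of degree $i_{j_1}+\cdots+i_{j_q}+\nu$, and plugging this together with the remaining inputs into $\alpha$ produces an element of degree
$$(i_{j_1}+\cdots+i_{j_q}+\nu)+(i_{k_1}+\cdots+i_{k_{p-1}})+\mu \;=\; i_1+\cdots+i_{p+q-1}+(\mu+\nu).$$
The symmetric second sum of (\ref{skobka_NR}) gives the same total weight $\mu+\nu$ by the same calculation. Summing, the whole bracket $[\alpha,\beta]$ lands in $\mathfrak{g}_{i_1+\cdots+i_{p+q-1}+\mu+\nu}$, which is exactly the statement $[\alpha,\beta]\in C^{p+q-1}_{(\mu+\nu)}(\mathfrak{g},\mathfrak{g})$.

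Third, to descend to cohomology I would note that the Chevalley–Eilenberg differential $d$ preserves the $\mathbb{Z}$-grading: the formula (1) shows that each term of $(d\alpha)(e_{i_1},\ldots,e_{i_{p+1}})$ is either $\mathrm{ad}(e_{i_s})$ applied to a value of $\alpha$ (which shifts the weight by $i_s$ and is then compensated by removing $e_{i_s}$ from the input list), or $\alpha$ applied to $[e_{i_s},e_{i_t}]\in\mathfrak{g}_{i_s+i_t}$ together with the remaining arguments; in both cases the resulting weight equals $i_1+\cdots+i_{p+1}+\mu$. Hence each $C^\ast_{(\mu)}$ is a subcomplex, the cohomology inherits a $\mathbb{Z}$-grading $H^\ast_{(\mu)}$, and combining this with the compatibility $d[\alpha,\beta]=[d\alpha,\beta]+(-1)^p[\alpha,d\beta]$ already recalled in the text, the bracket descends to a graded map $H^p_{(\mu)}\times H^q_{(\nu)}\to H^{p+q-1}_{(\mu+\nu)}$, as required.

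There is no real obstacle: once the conventions for the weight $\mu$ are fixed as above, the claim is a one-line degree count in every term of the Nijenhuis–Richardson formula. The only delicate point is to keep straight that the weight is the \emph{shift} produced by $\alpha$, not the absolute degree of $\alpha$ as a tensor; with that convention the additivity $\mu+\nu$ follows automatically from the two nested evaluations.
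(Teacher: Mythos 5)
Your proof is correct. The paper states this proposition without any proof at all, treating it as routine, and your argument — the weight count in each term of the Nijenhuis--Richardson formula using the convention that $\mu$ is the shift $l-(i_1+\cdots+i_q)$ from the paper's definition of $C^q_{(\mu)}(\mathfrak{g},\mathfrak{g})$, plus the observation that $d$ is homogeneous of weight zero and satisfies the Leibniz rule so that everything descends to cohomology — is exactly the standard verification the author intends the reader to supply.
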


Further we will examine only deformations$\Psi$ of ${\mathbb
N}$-graded Lie algebra $\mathfrak{g}=\oplus_{\alpha
> 0} \mathfrak{g}_{\alpha}$ of the form
$$
\Psi=\Psi_{0}+\Psi_{1}+\Psi_{2}+\ldots+\Psi_i+\ldots, \qquad
\Psi_i \in C^2_{(i)}(\mathfrak{g},\mathfrak{g}), \quad
i=1,2,\ldots.
$$

Decomposing $\Psi=\Psi_{0}+\Psi_{1}+\Psi_2+\ldots$ in the
deformation equation (\ref{mdef}) to uniform terms and comparing
the terms with same grading we come to the following system of
equations on homogeneous components $\Psi_l$:
\begin{equation} \label{sysdef}
\begin{split}
 d\Psi_{0}+\frac{1}{2}[\Psi_{0},\Psi_{0}]=0, \quad
d\Psi_{1}+[\Psi_{0},\Psi_{1}]=0, \quad
d\Psi_{2}+[\Psi_{0},\Psi_{2}]+\frac{1}{2}[\Psi_{1},\Psi_{1}]=0,\ldots, \\
d\Psi_{i}+\frac{1}{2}\sum_{m{+}l{=}i}[\Psi_{m},\Psi_{l}]=0, \ldots
\hspace{19mm}
\end{split}
\end{equation}
\begin{corollary}
The set $\Psi_{0},\Psi_{1}=0,\Psi_2=0,\ldots$ is a solution of the
system (\ref{sysdef}) and hence  $[,]+\Psi_0$ determines a new Lie
bracket.
\end{corollary}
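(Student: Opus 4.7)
The proof strategy is to verify directly that the prescribed data $(\Psi_0, 0, 0, \ldots)$ satisfies every equation of (\ref{sysdef}), and then read off the Lie-bracket conclusion from the equivalence between the single deformation equation (\ref{mdef}) and the Jacobi identity for the perturbed bracket $[\,,\,] + \Psi_0$.

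First, I would dispatch the $i$-th equation of (\ref{sysdef}) for $i \ge 1$. Substituting $\Psi_i = 0$ makes the differential term $d\Psi_i$ vanish. In the quadratic sum $\sum_{m+l=i}[\Psi_m, \Psi_l]$, every index pair satisfies $m + l = i \ge 1$, so at least one of $m$ or $l$ must be positive; hence the corresponding factor $\Psi_m$ or $\Psi_l$ is zero, and by bilinearity of the Nijenhuis--Richardson bracket every summand vanishes. Consequently every equation indexed by $i \ge 1$ collapses to the identity $0 = 0$ and imposes no extra condition.

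Next, the only remaining equation is the $i = 0$ equation, namely $d\Psi_0 + \tfrac{1}{2}[\Psi_0, \Psi_0] = 0$, which is exactly (\ref{mdef}) applied to the single cochain $\Psi = \Psi_0$. By the discussion preceding (\ref{mdef}), this equation is equivalent to the Jacobi identity for $[\,,\,]' = [\,,\,] + \Psi_0$; that is, it is equivalent to the assertion that $[\,,\,] + \Psi_0$ defines a Lie algebra structure on $\mathfrak{g}$. This is precisely the conclusion claimed.

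I do not expect any substantive obstacle: the corollary is bookkeeping, and the only thing to watch is that the weight decomposition (\ref{sysdef}) of (\ref{mdef}) was organized so that the grading truly decouples—which it does because the Nijenhuis--Richardson bracket respects the $\mathbb{Z}$-grading on $C^*(\mathfrak{g},\mathfrak{g})$ as recorded in the proposition just above. Once (\ref{sysdef}) is accepted, the verification above requires no further computation beyond substitution.
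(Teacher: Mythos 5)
Your verification is correct and is exactly the argument the paper intends: the corollary is stated without proof because, as you observe, every equation of (\ref{sysdef}) with $i\ge 1$ trivializes upon setting $\Psi_1=\Psi_2=\cdots=0$ (each quadratic term contains a vanishing factor), leaving only $d\Psi_0+\tfrac{1}{2}[\Psi_0,\Psi_0]=0$, which is the deformation equation (\ref{mdef}) for $\Psi_0$ alone and hence the Jacobi identity for $[\,,\,]+\Psi_0$. Your remark that the decoupling rests on the compatibility of $d$ and the Nijenhuis--Richardson bracket with the $\mathbb{Z}$-grading is the right thing to check, and it is supplied by the proposition preceding the system.
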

\begin{proposition}
\label{neotr_def} An arbitrary infinite dimensional Lie algebra of
maximal class ${\mathfrak g}$ is isomorphic to some non-negative
deformation $\Psi$ of the graded Lie algebra  ${\mathfrak m}_0$
$$
\Psi=\Psi_0+\Psi_1+\Psi_2+\dots+\Psi_i+\dots,\quad \Psi_i \in
C^2_{(i)}(\mathfrak{m}_0,\mathfrak{m}_0)
$$
such that
\begin{equation}
\label{adapt_basis}
\Psi(e_1, e_k)=0, \forall k \in {\mathbb N}.
\end{equation}
\end{proposition}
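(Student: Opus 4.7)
The plan is to start from the adapted basis provided by Corollary \ref{pos_def} and then simply read $\Psi$ off as the difference between the given bracket and the $\mathfrak{m}_0$-bracket, showing that the resulting object lies in non-negative weights and kills $e_1$.

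Concretely, let $\mathfrak g$ be an infinite-dimensional Lie algebra of maximal class with bracket $[\cdot,\cdot]_{\mathfrak g}$. By Corollary \ref{pos_def}, applied in the limit to the ascending chain of finite-dimensional filiform quotients whose direct limit is $\mathfrak g$, we obtain a basis $e_1, e_2, \dots$ of $\mathfrak g$ such that
\[
[e_1,e_i]_{\mathfrak g}=e_{i+1}\ (i\ge 2),\qquad [e_i,e_j]_{\mathfrak g}=\sum_{k=0}^{N(i,j)}c_{ij}^{\,i+j+k}\,e_{i+j+k}\ (i,j\ge 2).
\]
Identify the underlying vector space of $\mathfrak g$ with that of $\mathfrak m_0$ via this basis, and define the skew-symmetric bilinear map
\[
\Psi:=[\cdot,\cdot]_{\mathfrak g}-[\cdot,\cdot]_{\mathfrak m_0}.
\]
Since $[e_1,e_i]_{\mathfrak m_0}=e_{i+1}=[e_1,e_i]_{\mathfrak g}$, property (\ref{adapt_basis}) $\Psi(e_1,e_k)=0$ is immediate. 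For $i,j\ge 2$ we have $[e_i,e_j]_{\mathfrak m_0}=0$, hence $\Psi(e_i,e_j)=\sum_{k\ge 0}c_{ij}^{\,i+j+k}e_{i+j+k}$.

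Now decompose $\Psi$ with respect to the weight grading of $C^2(\mathfrak m_0,\mathfrak m_0)$ introduced in Section \ref{cohomology}: for each $k\ge 0$ set
\[
\Psi_k(e_i,e_j):=c_{ij}^{\,i+j+k}e_{i+j+k},\qquad \Psi_k(e_1,e_i):=0.
\]
Each $\Psi_k$ is a homogeneous cochain of weight $k$, i.e.\ $\Psi_k\in C^2_{(k)}(\mathfrak m_0,\mathfrak m_0)$, and on every pair $(e_i,e_j)$ only finitely many $\Psi_k$'s are nonzero (the sum in Corollary \ref{pos_def} is finite), so $\Psi=\Psi_0+\Psi_1+\Psi_2+\cdots$ makes sense pointwise and is a non-negative formal deformation.

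It remains to observe that the Jacobi identity for $[\cdot,\cdot]_{\mathfrak g}=[\cdot,\cdot]_{\mathfrak m_0}+\Psi$, which holds by hypothesis, is exactly the deformation equation (\ref{mdef}) $d\Psi+\tfrac12[\Psi,\Psi]=0$, so $\Psi$ is a deformation of $\mathfrak m_0$ in the sense of Section \ref{Nijenhuis-Richardson}. The identity map on basis elements is then the required isomorphism between $\mathfrak g$ and $(\mathfrak m_0,[\cdot,\cdot]+\Psi)$.

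The only real content of the proof is invoking Corollary \ref{pos_def} to secure the adapted basis in which $\operatorname{ad} e_1$ already agrees with its $\mathfrak m_0$-counterpart; everything else (non-negativity of weights, the fact that $\Psi$ solves the deformation equation, vanishing on $e_1$) is a direct consequence of the structural formula for $[e_i,e_j]_{\mathfrak g}$ supplied there. The potential pitfall is purely bookkeeping: one must check that the infinite sum $\sum_k\Psi_k$ is well-defined as a cochain on $\mathfrak m_0$, which is why the pointwise finiteness $N(i,j)<\infty$ from Corollary \ref{pos_def} is essential.
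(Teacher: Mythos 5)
Your proof is correct and takes essentially the same route as the paper: the paper disposes of this proposition in one line as a reformulation (in deformation-theoretic language) of the earlier structural results, and your argument simply spells that out by reading $\Psi$ off the adapted basis supplied by Corollary \ref{pos_def} and checking that the weights are non-negative and that the Jacobi identity becomes the deformation equation. The bookkeeping points you flag (pointwise finiteness of $\sum_k\Psi_k$, vanishing on $e_1$) are exactly the content the paper leaves implicit.
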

\begin{proof}
The statement is just a reformation of the Theorem \ref{fialZ} in
the terms of deformation theory.
\end{proof}
We will call deformations $\Psi$ of the form (\ref{adapt_basis})
adapted deformations.
\begin{proposition}
A form $\Psi \in C^2_+({\mathfrak m}_0,{\mathfrak m}_0)$  which
satisfies (\ref{kogranica}) is closed: $d\Psi=0$. The deformation
equation on the cocycle $\Psi$ will be written down as
$\left[\Psi, \Psi \right]=0$:
\begin{equation} \label{sysdef_2}
\begin{split}
\frac{1}{2}[\Psi_{0},\Psi_{0}]=0, \quad [\Psi_{0},\Psi_{1}]=0,
\quad
[\Psi_{0},\Psi_{2}]+\frac{1}{2}[\Psi_{1},\Psi_{1}]=0,\ldots, \\
\frac{1}{2}\sum_{m{+}l{=}i}[\Psi_{m},\Psi_{l}]=0, \ldots
\hspace{19mm}
\end{split}
\end{equation}
\end{proposition}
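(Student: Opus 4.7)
The proposition has two essentially independent parts: (a) the vanishing $d\Psi=0$ under the hypothesis (\ref{kogranica}), and (b) the splitting of $[\Psi,\Psi]=0$ into the weight-homogeneous system (\ref{sysdef_2}). Part (b) is nearly automatic once (a) is in hand, so the real content lies in (a).

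For (a), I would verify $d\Psi = 0$ by direct evaluation on basis triples $(e_a,e_b,e_c)$ using the formula for $d_2$ given earlier. The structure of $\mathfrak{m}_0$ enormously simplifies this: the only nonzero brackets are $[e_1,e_i]=e_{i+1}$ for $i\ge 2$, so whenever none of $a,b,c$ equals $1$ every bracket $[e_a,e_b]$, $[e_a,e_c]$, $[e_b,e_c]$ vanishes and the $ad$-terms also vanish (because the image of $\Psi$ lies in the span of the $e_k$ and the remaining $\rho$ acts as zero on the right-hand slot whose argument has no $e_1$). The only potentially nonzero case is therefore $(e_1,e_i,e_j)$ with $2\le i<j$, where
\[
(d\Psi)(e_1,e_i,e_j)=[e_1,\Psi(e_i,e_j)]-\Psi(e_{i+1},e_j)-\Psi(e_i,e_{j+1})-[e_i,\Psi(e_1,e_j)]+[e_j,\Psi(e_1,e_i)].
\]
The hypothesis (\ref{kogranica}) controls precisely the values of $\Psi$ on $(e_1,e_k)$ and couples the remaining values $\Psi(e_i,e_j)$ to shifts by $ad\,e_1$, which is nothing but the action of the operator $D_1$ dualized; the vanishing of the above expression is exactly the condition that $\Psi$, expanded in the basis cocycles $\Psi_{j,s}$ of Theorem \ref{main}, be a formal series in genuine cocycles. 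In practice, I would write $\Psi=\sum x_{j,s}\Psi_{j,s}$ (possible because (\ref{kogranica}) fixes enough of its values via (\ref{property_Psi_1})) and then invoke $d\Psi_{j,s}=0$.

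For (b), with $d\Psi=0$ now established, the full deformation equation (\ref{mdef}) reduces immediately to $[\Psi,\Psi]=0$. To obtain the weight-graded system, decompose $\Psi=\sum_{i\ge 0}\Psi_i$ with $\Psi_i\in C^2_{(i)}(\mathfrak{m}_0,\mathfrak{m}_0)$. By the proposition on compatibility of the Nijenhuis--Richardson bracket with the weight grading (stated just above), $[\Psi_m,\Psi_l]\in C^3_{(m+l)}(\mathfrak{m}_0,\mathfrak{m}_0)$. Hence
\[
[\Psi,\Psi]=\sum_{i\ge 0}\sum_{m+l=i}[\Psi_m,\Psi_l],
\]
and since components of distinct weights are linearly independent in $C^3(\mathfrak{m}_0,\mathfrak{m}_0)$, the whole bracket vanishes if and only if each weight component vanishes, yielding exactly (\ref{sysdef_2}).

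\paragraph{Main obstacle.}
The delicate step is (a): showing that (\ref{kogranica}) already forces $d\Psi=0$ rather than only a weaker constraint. The cleanest route, as sketched, is to reduce to the explicit cocycle basis of Theorem \ref{main} via the operators $D_1$, $D_{-1}$ and the normalization (\ref{property_Psi_1}); the recursion $f(i,j)=f(i+1,j)+f(i,j+1)$ for the coefficients $\Psi(e_i,e_j)$ that emerges from the $(e_1,e_i,e_j)$ case is precisely solved by the binomial coefficients appearing in (\ref{znachenia_cocycla}), and this reconciliation of the adapted-basis data with the closure identity is what makes the proposition work.
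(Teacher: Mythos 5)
Your part (b) is correct and is exactly the intended argument. The gap is in part (a), at the step where you pass from the hypothesis (\ref{kogranica}) to the expansion $\Psi=\sum x_{j,s}\Psi_{j,s}$. That expansion is available precisely for the \emph{closed} adapted forms: the cocycles $\Psi_{j,s}$ are characterized by closedness together with the normalization (\ref{property_Psi_1}), so invoking the expansion in order to prove $d\Psi=0$ is circular. Adaptedness fixes only the values $\Psi(e_1,e_k)$; the values $\Psi(e_j,e_k)$ with $j,k\ge 2$ remain completely arbitrary, and the binomial pattern (\ref{znachenia_cocycla}) is an extra, nontrivial constraint. Concretely, $\Psi=e_5\otimes e^2\wedge e^3$ lies in $C^2_{(0)}(\mathfrak{m}_0,\mathfrak{m}_0)$ and satisfies (\ref{kogranica}), yet $d\Psi(e_1,e_2,e_3)=[e_1,e_5]=e_6\ne 0$. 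So adaptedness alone cannot yield closedness; the recursion you mention at the end \emph{is} the closedness condition, not a consequence of (\ref{kogranica}).

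The missing ingredient is the deformation equation itself, which is the implicit standing hypothesis here: the $\Psi$ of this proposition is the one produced by Proposition \ref{neotr_def}, so $[\cdot,\cdot]+\Psi$ satisfies the Jacobi identity, i.e.\ $d\Psi+\tfrac12[\Psi,\Psi]=0$. With that in hand the argument closes along the route the paper takes (evaluation on basis triples, split according to whether $e_1$ occurs). On triples $(e_i,e_j,e_k)$ with $i,j,k\ge 2$ your computation already gives $d\Psi=0$, since all brackets $[e_i,e_j]$ and $[e_i,\Psi(e_j,e_k)]$ vanish in $\mathfrak{m}_0$ ($\Psi(e_j,e_k)$ has no $e_1$-component because $\Psi\in C^2_+$). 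On triples $(e_1,e_j,e_k)$ one has from (\ref{skobka_NR})
\begin{equation*}
[\Psi,\Psi](e_1,e_j,e_k)=2\,\Psi\bigl(\Psi(e_j,e_k),e_1\bigr)=0
\end{equation*}
by (\ref{kogranica}), so the deformation equation forces $d\Psi(e_1,e_j,e_k)=0$ as well. Hence $d\Psi=0$ everywhere, the equation reduces to $[\Psi,\Psi]=0$, and only then does the expansion $\Psi=\sum x_{j,s}\Psi_{j,s}$ (used in the subsequent propositions) become legitimate; your weight-by-weight decomposition then gives (\ref{sysdef_2}) exactly as stated.
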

\begin{proof}
In order to verify the closeness of the form $\Psi$ it it suffices
to examine the values$d\Psi$ on the triples of basic vectors. From
the other side if among triples $e_i,e_j,e_k$ there are no $e_1$
then $d\Psi(e_i,e_j,e_k)=0$ in view of the commutation
relationships ${\mathfrak m}_0$ but if, for example, $e_i=e_1$,
then in view of $\Psi(e_1, e_k)=0, \forall k$ it holds
$$d\Psi(e_1,e_j,e_k)=-\left[e_1,\Psi(e_j,e_k) \right].$$
The kernel of the operator $ad e_1: {\mathfrak m}_0 \to {\mathfrak
m}_0$ is one-dimensional and spanned by $e_1$ and hence  $d\Psi=0$
if and only if $\Psi=0$ ($\Psi(e_j,e_k)\ne \alpha e_1$ because
$\Psi \in C^2_+({\mathfrak m}_0,{\mathfrak m}_0)$).
\end{proof}
as it was already noted earlier the cocycle $\Psi$ can be
decomposed in a formal series $\Psi=\sum x_{j,s}\Psi_{j,s}$ with
respect to basic cocycles $\Psi_{j,s}$. But it is easy to see that
not eveyone formal series answers a Lie algebra in a standard
meaning.
\begin{proposition}
A formal series $\Psi=\sum x_{j,s}\Psi_{j,s}$ determines a Lie
algebra structure $[,]+\Psi$ if and only if  $[\Psi,\Psi]=0$ and
for an arbitrary $j \in {\mathbb N}$ exists $N(j)\in {\mathbb N}$
such that $x_{j,s}=0$ when $s > N(j)$.
\end{proposition}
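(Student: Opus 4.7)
The plan is to verify both directions by translating "determines a Lie algebra structure" into two concrete requirements: (a) the bilinear map $\Psi$ actually lands in $\mathfrak{m}_0$ (the vector space of \emph{finite} linear combinations of the $e_n$), and (b) the Jacobi identity holds for $[\cdot,\cdot]+\Psi$. Requirement (b) is the easy half: each basic cocycle $\Psi_{j,s}$ vanishes on pairs containing $e_1$, so any formal combination $\Psi=\sum x_{j,s}\Psi_{j,s}$ satisfies the hypothesis of the preceding proposition, giving $d\Psi=0$. The deformation equation then reduces to $[\Psi,\Psi]=0$, exactly as claimed.

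For (a), I would work directly from the explicit table (\ref{znachenia_cocycla}) for $\Psi_{j,s}(e_k,e_m)$. Fix a pair $2\le k<m$. The cocycle $\Psi_{j,s}$ contributes to $\Psi(e_k,e_m)$ only when $k\le j<m$ and $k+m\ge 2j+1$, which pins $j$ to the finite range $k\le j\le (k+m-1)/2$. For each admissible $j$, the values $\Psi_{j,s}(e_k,e_m)$, as $s$ varies, are scalar multiples of pairwise distinct basis vectors $e_{m+k-2j-1+s}$. Consequently $\Psi(e_k,e_m)$ is a finite sum in $\mathfrak{m}_0$ precisely when, for each such $j$, only finitely many $s$ have $x_{j,s}\ne 0$.

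To extract necessity in its sharpest form I would specialize to $(k,m)=(j,j{+}1)$: by (\ref{property_Psi_1}), $\Psi_{j,s}$ is the unique basic cocycle contributing to $\Psi(e_j,e_{j+1})$, and we get
\[
\Psi(e_j,e_{j+1})=\sum_{s\ge 0} x_{j,s}\,e_{2j{+}1{+}s}.
\]
For this element to lie in $\mathfrak{m}_0$ it is necessary that $x_{j,s}=0$ for all $s$ beyond some $N(j)$. Conversely, assuming this bound for every $j$, the argument of the previous paragraph shows that for any pair $(k,m)$ the sum $\Psi(e_k,e_m)$ receives contributions from only finitely many pairs $(j,s)$, hence is a bona fide element of $\mathfrak{m}_0$; combined with $[\Psi,\Psi]=0$ this means $[\cdot,\cdot]+\Psi$ is a Lie bracket.

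The only mild obstacle is making the "well-definedness" argument airtight: one must check that no cancellations between different $j$'s in the admissible range could rescue convergence if the $x_{j,s}$ are not eventually zero in $s$ for each $j$. The specialization $(k,m)=(j,j{+}1)$ circumvents this since only one value of $j$ is admissible, so the condition is forced $j$ by $j$ with no possibility of interference. Everything else is a direct substitution into formula (\ref{znachenia_cocycla}) and a finiteness count.
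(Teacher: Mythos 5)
Your proposal is correct and follows essentially the same route as the paper: necessity is extracted from the commutator $[e_j,e_{j+1}]_{\Psi}=\sum_s x_{j,s}e_{2j+1+s}$, and sufficiency from the table (\ref{znachenia_cocycla}), which confines $j$ to the finite range $k\le j\le\left[\frac{m+k-1}{2}\right]$ so that each $[e_k,e_m]_{\Psi}$ is a finite sum. Your explicit remark that the specialization $(k,m)=(j,j+1)$ isolates a single admissible $j$ and rules out cancellation is a welcome clarification of a point the paper leaves implicit, but it does not change the argument.
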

\begin{proof}
Let us consider a commutator
$[e_j,e_{j+1}]_{\Psi}=\Psi(e_j,e_{j+1})=\sum_{s=0}^{+\infty}
x_{j,s}e_{2j+1+s}$ when $j>1$. Our condition is equivalent to the
fact that the linear combination is finite and it is necessary
condition. From the other side, if it satisfied, for an arbitrary
commutator $[e_k,e_m]_{\Psi}, m>k
>1,$ one has according to the formula (\ref{znachenia_cocycla})
$$
[e_k,e_m]_{\Psi}=\sum_{j,s}
x_{j,s}\Psi_{j,s}(e_k,e_m)=\sum_{j=k}^{\left[\frac{m+k+1}{2}\right]}
(-1)^{j-k}\binom{m-j-1}{j-k}\sum_{s=0}^{N(j)}x_{j,s}e_{m+k-2j-1+s}.
$$
\end{proof}
\begin{theorem}
\label{main_theorem} Let us consider a fromal series in the
completed space $\oplus_{s \ge 0}
H^2_s(\mathfrak{m}_0,\mathfrak{m}_0)$
$$\Psi=\sum_{s=0}^{+\infty}\sum_{j=2}^{+\infty}x_{j,s}\Psi_{j,s},$$
where cocycles $\{\Psi_{j,s}\}$ are defined by the formulas
(\ref{znachenia_cocycla}). A cocycle $\Psi$ satisfies the
deformation equation $[\Psi,\Psi]=0$ if and only if the
coefficients $x_{j,s}$ of the series $\Psi$ satisfy the following
system of quadratic equations $F_{j,q,r}=0,\;2 \le j {<} q, r
{\ge} 0$:
\begin{equation}
\label{main_equations}
\begin{split}
F_{j,q,r}=\sum_{t{=}0}^r
\sum_{l{=}j}^{\left[\frac{j{+}q{-}1}{2}\right]}
\sum_{m{=}q{+}1}^{q{+}\left[\frac{j{+}t}{2}\right]}({-}1)^{l{-}j{+}m{-}q}
\binom{q{-}l{-}1}{l{-}j}\binom{j{+}q{-}m{+}t{-}1}{m{-}q{-}1}x_{l,t}x_{m,r{-}t}+\\
+\sum_{t{=}0}^r \sum_{l{=}j}^{\left[\frac{j{+}q}{2}\right]}
\sum_{m{=}q}^{q{+}\left[\frac{j{+}t}{2}\right]}({-}1)^{l{-}j{+}m{-}q}
\binom{q{-}l}{l{-}j}\binom{j{+}q{-}m{+}t}{m{-}q}x_{l,t}x_{m,r{-}t}+\\
+\sum_{t{=}0}^r
\sum_{m{=}j}^{q{+}\left[\frac{j{+}t}{2}\right]}({-}1)^{m{-}j{+}1}
\binom{2q{-}m{+}t}{m{-}j}x_{q,t}x_{m,r{-}t}=0.
\end{split}
\end{equation}
\end{theorem}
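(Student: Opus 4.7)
The plan is to compute $[\Psi,\Psi]$ as a 3-cochain, show it lies in the span of the basis $\{\Psi_{j,q,r}\}$ of closed adapted 3-cochains of positive weight, and read off its coordinates as the $F_{j,q,r}$.

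First, since each $\Psi_{l,t}$ is a cocycle with $\Psi_{l,t}(e_1,\cdot)=0$ (the condition $k\ge 2$ in (\ref{znachenia_cocycla})), the formal series $\Psi$ is a closed adapted 2-cochain. The $d$-derivation property of the Nijenhuis--Richardson bracket yields $d[\Psi,\Psi]=2[d\Psi,\Psi]=0$, so $[\Psi,\Psi]$ is a 3-cocycle of positive weight. Since the image of $\Psi$ sits in $\langle e_2,e_3,\dots\rangle$, a direct check gives $[\Psi,\Psi](e_1,e_a,e_b)=0$, so $[\Psi,\Psi]$ is adapted.

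Next, I claim that any closed adapted 3-cochain $\Phi\in C^3_+(\mathfrak{m}_0,\mathfrak{m}_0)$ equals $\sum c_{j,q,r}\Psi_{j,q,r}$ uniquely, with $c_{j,q,r}$ read off from the ``basic'' value $\Phi(e_j,e_q,e_{q+1})$, paralleling (\ref{property_Psi_2}). Indeed, the closedness equation $d\Phi(e_1,e_i,e_j,e_k)=0$, combined with $\Phi(e_1,\cdot,\cdot)=0$ and $[e_a,e_b]=0$ for $a,b\ge 2$ in $\mathfrak{m}_0$, reduces to
\[ ad(e_1)\Phi(e_i,e_j,e_k)+\Phi(e_{i+1},e_j,e_k)+\Phi(e_i,e_{j+1},e_k)+\Phi(e_i,e_j,e_{k+1})=0. \]
An induction on $k-j$ then reconstructs $\Phi$ on all triples from the basic ones $(e_i,e_j,e_{j+1})$. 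Consequently, $[\Psi,\Psi]=0$ iff the coefficient of $e_{j+2q+1+r}$ in $[\Psi,\Psi](e_j,e_q,e_{q+1})$ vanishes for all $2\le j<q$ and $r\ge 0$.

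By (\ref{skobka_NR}),
\[ \tfrac{1}{2}[\Psi,\Psi](e_j,e_q,e_{q+1})=\Psi(\Psi(e_j,e_q),e_{q+1})-\Psi(\Psi(e_j,e_{q+1}),e_q)+\Psi(\Psi(e_q,e_{q+1}),e_j). \]
Substituting $\Psi=\sum x_{l,t}\Psi_{l,t}$ and applying (\ref{znachenia_cocycla}) twice (first to the inner bracket, then to the outer), each of the three terms becomes a double sum in $l,m$ and $t$. For the first term, $\Psi_{l,t}(e_j,e_q)$ restricts to $j\le l\le [(j+q-1)/2]$ with factor $(-1)^{l-j}\binom{q-l-1}{l-j}$, and the outer application constrains $m$ to $q+1\le m\le q+[(j+t)/2]$ with factor $(-1)^{m-q}\binom{j+q-m+t-1}{m-q-1}$, yielding the first double sum of $F_{j,q,r}$. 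The second term, with inner pair $(e_j,e_{q+1})$, shifts $q\to q+1$ in the corresponding binomials and lowers the range of $m$ to $q$, giving the second sum with $\binom{q-l}{l-j}\binom{j+q-m+t}{m-q}$. The third term simplifies because $\Psi(e_q,e_{q+1})=\sum_t x_{q,t}\,e_{2q+1+t}$ (only $l=q$ contributes), so a single application of (\ref{znachenia_cocycla}) to $(e_j,e_{2q+1+t})$ produces the third sum with $x_{q,t}x_{m,r-t}$ and factor $(-1)^{m-j+1}\binom{2q-m+t}{m-j}$.

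The main obstacle is the combinatorial bookkeeping: correctly determining the ranges of the summation indices $l,m$ from the conditions $k\le l<n$ and $k+n\ge 2l+1$ of (\ref{znachenia_cocycla}), and verifying that the products of binomials and the overall signs $(-1)^{l-j+m-q}$ match the three sums of $F_{j,q,r}$ exactly. Once this alignment is confirmed, $\tfrac{1}{2}[\Psi,\Psi](e_j,e_q,e_{q+1})=\sum_r F_{j,q,r}\,e_{j+2q+1+r}$, completing the proof.
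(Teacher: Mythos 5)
Your proposal is correct and follows essentially the same route as the paper: both reduce the problem to reading off the coefficients of $\tfrac{1}{2}[\Psi,\Psi]$ in the basis $\{\Psi_{j,q,r}\}$ via the values on the triples $(e_j,e_q,e_{q+1})$, using that a closed adapted $3$-cochain is determined by these basic values, and then evaluate term by term with (\ref{znachenia_cocycla}). The only difference is organizational --- you group the computation by the three cyclic terms $\Psi(\Psi(e_j,e_q),e_{q+1})$, $-\Psi(\Psi(e_j,e_{q+1}),e_q)$, $\Psi(\Psi(e_q,e_{q+1}),e_j)$, whereas the paper expands over pairs of basis cocycles $[\Psi_{m,s},\Psi_{l,t}]$ with $m\ge l$ and tracks the six summands of the Nijenhuis--Richardson bracket --- and your index ranges, signs and binomial factors all match the three sums of (\ref{main_equations}).
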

\begin{proof}
earlier it was noted, that the Nijenhuis-Richardson bracket is
compatible with the differentuial of the Lie algebra cochain
complex with coefficients in the adjoint representation. Thus the
square $[\Psi, \Psi]$ of an arbitrary cocyle $\Psi$ is a closed
form. In addition to this it follows from $\Psi(e_1,e_k)=0,
\forall k$ that $[\Psi,\Psi](e_1,e_k,e_l)=0, \forall k,l$. But a
cocycle that satisfies this condition uniquely determines some
cohomology in $\oplus_{s \ge 0}
H^3_s(\mathfrak{m}_0,\mathfrak{m}_0)$ ( see the proof of the
Proposition (\ref{neotr_def})) and hence $\frac{1}{2}[\Psi, \Psi]$
can be decomposed in a formal serieswith respect to the basis
$\Psi_{j,q,r}$:
$$
\frac{1}{2}[\Psi,\Psi]=\sum_{2 \le j < q, r \ge
0}F_{j,q,r}\Psi_{j,q,r}.
$$
Thus $[\Psi,\Psi]=0$ if when and only when all the coefficients
$F_{j,q,r}$ of this decomposition vanish. It remains to calculate
them explicitly as polynomials on variables $x_{j,s}$. Let us
remark for this that according to the property
(\ref{property_Psi_2})
$$
\frac{1}{2}[\Psi,\Psi](e_j,e_q,e_{q+1})=\sum_{r=0}^{+\infty}F_{j,q,r}e_{j+2q+1+r}.
$$
On the another side
$$
\frac{1}{2}[\Psi,\Psi]=\sum_{r=0}^{+\infty}\sum_{s+t=r}\sum_{m\ge
l} x_{m,s}x_{l,t}[\Psi_{m,s},\Psi_{l,t}].
$$
Let us calculate the value
$[\Psi_{m,s},\Psi_{l,t}](e_j,e_q,e_{q+1})$ when $j < q$ directly
from the definition (\ref{skobka_NR}) of the Nijenhuis-Richardson
bracket under the assumption that $m > l$:
\begin{equation}
\label{skobka_basisa}
\begin{split}
[\Psi_{m,s}{,}\Psi_{l,t}](e_j,e_q,e_{q{+}1})=\Psi_{m,s}(
\Psi_{l,t}(e_j,e_q),e_{q{+}1}){+}\Psi_{m,s}(
\Psi_{l,t}(e_q,e_{q{+}1}),e_j){+}
\Psi_{m,s}(\Psi_{l,t}(e_{q{+}1},e_j),e_q){+} \\
+\Psi_{l,t}(\Psi_{m,s}(e_j,e_q),e_{q{+}1}){+}
\Psi_{l,t}(\Psi_{m,s}(e_q,e_{q{+}1}),e_j){+}
\Psi_{l,t}(\Psi_{m,s}(e_{q{+}1},e_j),e_q).
\end{split}
\end{equation}
Using the properties of the cocycles $\Psi_{i,r}$ we have when
$t\ge 0, s \ge 0$:
$$
\Psi_{m,s}(
\Psi_{l,t}(e_j,e_q),e_{q{+}1})=(-1)^{l{-}j{+}m{-}q}\binom{q{-}l{-}1}{l{-}j}\binom{j{+}q{-}m{+}t{-}1}{m{-}q{-}1}
e_{2q{+}j{+}1{+}t{+}s},
$$
assuming that $0\le l{-}j\le q{-}l{-}1$ and $0\le m{-}q{-}1\le
j{+}q{-}m{+}t{-}1$, otherwise the value iz trivial. We fin
analogously the third summand:
$$
\Psi_{m,s}(
\Psi_{l,t}(e_{q{+}1},e_j),e_q)=(-1)^{l{-}j{+}m{-}q}\binom{q{-}l}{l{-}j}\binom{j{+}q{-}m{+}t}{m{-}q}
e_{2q{+}j{+}1{+}t{+}s},
$$
under $0\le l{-}j\le q{-}l$ and $0\le m{-}q\le j{+}q{-}m{+}t$. It
is it is simple to be remark that the forth and the sixth summands
in (\ref{skobka_basisa}) vanish when  $m> l$. Concerning the
second and the fifth summands, they are non-trivial only when
$l=q$ and $m=q$ respectively. In this case
\begin{equation}
\begin{split}
\Psi_{m,s}(
\Psi_{q,t}(e_q,e_{q{+}1}),e_j)={-}(-1)^{m{-}j}\binom{2q{-}m{+}t}{m{-}j}
e_{2q{+}j{+}1{+}t{+}s},\\
 \Psi_{l,t}(\Psi_{q,s}(e_q,e_{q{+}1}),e_j)={-}(-1)^{l{-}j}\binom{2q{-}l{+}s}{l{-}j}
e_{2q{+}j{+}1{+}t{+}s},
\end{split}
\end{equation}
where if $l{=}q$ then it holds on that $0\le m{-}j\le 2q{-}m{+}t$
and $m>l{=}q$ with $m{=}q$ the inequalities $0\le l{-}j\le
2q{-}l{+}s$ and $l{<} m{=}q$ must hold on. The contribution of
these terms with $m>l$ to the total sum (\ref{main_equations}) is
expressed in the following way
$$
 -\sum_{t{=}0}^r \sum_{l{=}j}^{q{-}1}({-}1)^{l{-}j}
\binom{2q{-}l{+}r{-}t}{l{-}j}x_{l,t}x_{q,r{-}t}-\sum_{t{=}0}^r
\sum_{m{=}q{+}1}^{q{+}\left[\frac{j{+}t}{2}\right]}({-}1)^{m{-}j}
\binom{2q{-}m{+}t}{m{-}j}x_{q,t}x_{m,r{-}t}=0.
$$
after redesignating in the second sum the indices of the summing
up from $m, t$ to $l,r-t$ respectively and after adding the terms
with $m=l=q$ we will get the sum from the third line of the
formula (\ref{main_equations}).

The case  $m=l$ is investigated analogously. gathering together
the results of the calculations we get the expression for the
coefficient $F_{j,q,r}$ with $e_{2q{+}j{+}1{+}r}$ standing in the
decomposition $[\Psi,\Psi](e_j,e_q,e_{q{+}1})$.
\end{proof}
A natural analogy with the singularity theory says that the affine
variety  $M_{\it Fil}$ is nothing else that a variety of
parameters  of nilpotent versal deformation of the algebra
${\mathfrak m}_0$: an arbitrary residually nilpotent Lie algebra
${\mathfrak g}$ of maximal class  is isomoorphic to some algebra
from our variety $M_{\it Fil}$. The last property is called
versality (in contrast to the universality) because a pointof the
variety $M_{\it Fil}$ is determined for an algebra  ${\mathfrak
g}$ not uniquely.
\begin{proposition}
\label{isom_lemma} Two deformations
$\Psi=\Psi_0+\Psi_1+\Psi_2+\dots+\Psi_i+\dots$ and $\tilde
\Psi=\beta\Psi_0+\beta\alpha\Psi_1+\beta\alpha^2\Psi_2+\dots+\beta\alpha^i\Psi_i+\dots$
of the Lie algebra ${\mathfrak m}_0$ determine isomorphic Lie
algebras with $\alpha \ne 0, \beta \ne 0$.
\end{proposition}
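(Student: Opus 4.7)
The plan is to exhibit an explicit Lie algebra isomorphism
$\varphi\colon(\mathfrak{m}_0,[\cdot,\cdot]+\Psi)\longrightarrow(\mathfrak{m}_0,[\cdot,\cdot]+\tilde\Psi)$
that is diagonal in the basis $\{e_k\}$. The reason to expect this works is that the rescaling $\tilde\Psi_i=\beta\alpha^i\Psi_i$ is precisely the two-parameter torus action compatible with the $\mathbb Z$-grading of $C^*(\mathfrak{m}_0,\mathfrak{m}_0)$, so the automorphism undoing it should itself be diagonal with eigenvalues built from powers of $\alpha$ and $\beta$.

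The first step is to make the ansatz $\varphi(e_1)=\alpha e_1$ and $\varphi(e_k)=\gamma_k e_k$ for $k\ge 2$, and to exploit the adaptedness condition (\ref{adapt_basis}), which ensures $\Psi(e_1,e_k)=\tilde\Psi(e_1,e_k)=0$. The homomorphism identity $\varphi([e_1,e_k]_{\Psi})=[\varphi(e_1),\varphi(e_k)]_{\tilde\Psi}$ then collapses to the recurrence $\gamma_{k+1}=\alpha\gamma_k$ for $k\ge 2$, which forces $\gamma_k=\alpha^{k-2}\gamma_2$.

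The second step handles the brackets between $e_j$ and $e_k$ with $j,k\ge 2$. Since $[e_j,e_k]=0$ in $\mathfrak{m}_0$, the deformed bracket $[e_j,e_k]_{\Psi}$ equals $\sum_i\Psi_i(e_j,e_k)$, and the $\mathbb Z$-grading of $C^2_{(i)}(\mathfrak{m}_0,\mathfrak{m}_0)$ forces each $\Psi_i(e_j,e_k)$ to be a scalar multiple of $e_{j+k+i}$. Matching coefficients of $e_{j+k+i}$ on both sides of $\varphi([e_j,e_k]_{\Psi})=[\varphi(e_j),\varphi(e_k)]_{\tilde\Psi}$ reduces the whole system of homomorphism conditions to the single scalar equation $\gamma_{j+k+i}=\gamma_j\gamma_k\beta\alpha^i$; after substituting $\gamma_k=\alpha^{k-2}\gamma_2$, this is equivalent to $\gamma_2=\alpha^2/\beta$.

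Finally, one sets $\gamma_k=\alpha^k/\beta$ for $k\ge 2$; this defines $\varphi$ unambiguously, and $\varphi$ is invertible because $\alpha,\beta\ne 0$. The only real obstacle is guessing the correct diagonal ansatz---and in particular that $\varphi(e_1)$ must be scaled by $\alpha$ while $\varphi(e_k)$ for $k\ge 2$ is scaled by $\alpha^k/\beta$, so that the two scaling parameters act independently on the generator $e_1$ and on the descending tail. Once this is in place, the verification is just bookkeeping with the weight grading and the adaptedness hypothesis.
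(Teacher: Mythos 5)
Your proof is correct and follows the same strategy as the paper: exhibit a diagonal change of basis intertwining the two deformed brackets. In fact your version is the more careful one. The paper's proof consists of the single formula $\varphi(e_i)=\beta\alpha^i e_i$ for all $i\in{\mathbb N}$, which taken literally at $i=1$ is not an automorphism of ${\mathfrak m}_0$ unless $\beta=1$ (one checks $\varphi([e_1,e_k])=\beta\alpha^{k+1}e_{k+1}$ while $[\varphi(e_1),\varphi(e_k)]=\beta^2\alpha^{k+1}e_{k+1}$), so it does not intertwine the deformed brackets either. The general diagonal automorphism of ${\mathfrak m}_0$ is forced to have the form $e_1\mapsto\lambda e_1$, $e_k\mapsto c\lambda^k e_k$ for $k\ge 2$ with the constant $c$ \emph{not} applied to $e_1$; your computation $\varphi(e_1)=\alpha e_1$, $\varphi(e_k)=\alpha^k\beta^{-1}e_k$ is exactly this (the paper's $\beta\alpha^k$ on the tail is the same two-parameter family up to inverting the map and replacing $\beta$ by $\beta^{-1}$). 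Your explicit appeal to the adaptedness condition (\ref{adapt_basis}) is also genuinely needed for the recurrence on the $[e_1,e_k]$-brackets to close, and the weight-grading argument pinning $\Psi_i(e_j,e_k)$ to a multiple of $e_{j+k+i}$ correctly reduces everything to the single scalar condition $\gamma_2=\alpha^2/\beta$. No gaps.
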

\begin{proof}
it suffices to examine an automorphism $\varphi$ of the algebra
${\mathfrak m}_0$ of the following form:
$$
\varphi(e_i)=\beta \alpha^i e_i, \: i \in {\mathbb N}.
$$
\end{proof}
In connection with nilpotent algebras our definition of nilpotent
versal deformation appears more convenient than more general
abstract definition \cite{FialFu}. But if we nevertheless follow
the abstract approach, let us consider the quotient of an
associative commutative polynomial algebra
$$A={\mathbb K}\left[[\{x_{j,s}\}]\right]/(\{F_{j,q,r}\})$$
from the infinite collection of the variables $\{x_{j,s}, j \ge 2,
s \ge 0\}$ with the unit $1$ over ideal generated by the infinite
system of quadratic polynomials$\{F_{j,q,s}, 2\le j<q, s \ge 0\}$
of this algebra? defined by formulas (\ref{main_equations}). The
augmentation $\varepsilon: A \to {\mathbb K}$  of the algebra $A$
is defined in a standard way: $\varepsilon(P)=P(0)$, where a
polynomial
 $P \in A$. The completed tensor product
$A\hat{\otimes} {\mathfrak m}_0$ is not only a linear space over
${\mathbb K}$ but and a  $A$-module: for all $x_{j,s} \in A$ and
$x \in {\mathfrak m}_0$ we suppose that $x_{j,s}\cdot 1\otimes x
=x_{j,s}\otimes x$.

The theorem \ref{main_theorem} can be formulated in the new terms.
\begin{proposition}
An $A$-linear Lie bracket $[,]_A$ is defined on the completed
tensor product $A\hat{\otimes} {\mathfrak m}_0$  which is assigned
as follows
$$
\left[ 1 \otimes x, 1 \otimes y \right]_A=1 \otimes
[x,y]_{{\mathfrak m}_0}+\sum_{j,s}x_{j,s}\otimes \Psi_{j,s}(x,y),
$$
where the values $\Psi_{j,s}(x,y) \in {\mathfrak m}_0$ are defined
by the formula (\ref{znachenia_cocycla}). It is evident that an
application
$$
\varepsilon \otimes id : A \otimes {\mathfrak m}_0 \to {\mathbb K}
\otimes {\mathfrak m}_0={\mathfrak m}_0
$$
is a Lie ${\mathbb K}$-algebra homomorphism.
\end{proposition}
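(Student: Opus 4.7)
The plan is to reduce the proposition to Theorem \ref{main_theorem}. First I would define $[\cdot,\cdot]_A$ by $A$-bilinear extension of the given formula on generators $1\otimes x$, $1\otimes y$, checking beforehand that the formal series $\sum_{j,s}x_{j,s}\otimes\Psi_{j,s}(x,y)$ is a legitimate element of $A\hat{\otimes}{\mathfrak m}_0$. This is straightforward because for fixed basis vectors $x=e_k$, $y=e_m$ formula (\ref{znachenia_cocycla}) shows that $\Psi_{j,s}(e_k,e_m)$ is nonzero only in a prescribed finite range of $j$ (depending on $k,m$) with values in the one-dimensional subspace spanned by $e_{k+m-2j-1+s}$. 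Thus each basis vector of ${\mathfrak m}_0$ receives only finitely many contributions in the sum, so the coefficient of each $e_n$ is a genuine element of $A$.

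Antisymmetry of $[\cdot,\cdot]_A$ is immediate from the antisymmetry of $[\cdot,\cdot]_{{\mathfrak m}_0}$ and of every $\Psi_{j,s}$. The central point is the Jacobi identity. For triples $1\otimes x, 1\otimes y, 1\otimes z$ the cyclic Jacobi sum produces
$$J(x,y,z)\;=\;1\otimes\bigl(d\Psi+\tfrac{1}{2}[\Psi,\Psi]\bigr)(x,y,z),$$
where $\Psi=\sum_{j,s}x_{j,s}\Psi_{j,s}$ is viewed as a $2$-cochain with values in $A\hat{\otimes}{\mathfrak m}_0$ and the bracket is the Nijenhuis--Richardson bracket extended $A$-linearly. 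Each $\Psi_{j,s}$ is a cocycle satisfying $\Psi_{j,s}(e_1,\cdot)=0$, so by the closedness argument used earlier (in the proposition characterizing adapted deformations) we have $d\Psi=0$, and $J$ collapses to $\tfrac{1}{2}[\Psi,\Psi]$. Theorem \ref{main_theorem} then gives the expansion
$$\tfrac{1}{2}[\Psi,\Psi]\;=\;\sum_{2\le j<q,\;r\ge 0}F_{j,q,r}\,\Psi_{j,q,r},$$
whose coefficients $F_{j,q,r}$ are exactly the polynomials generating the ideal by which we have quotiented to form $A$. Hence $J(x,y,z)=0$ in $A\hat{\otimes}{\mathfrak m}_0$, and $A$-bilinearity (with $A$ commutative) extends Jacobi to arbitrary elements of $A\hat{\otimes}{\mathfrak m}_0$.

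The claim about $\varepsilon\otimes\mathrm{id}$ is evident: since $\varepsilon(x_{j,s})=0$ for every generator, applying $\varepsilon\otimes\mathrm{id}$ to the defining formula leaves only the term $1\otimes[x,y]_{{\mathfrak m}_0}$, so $\varepsilon\otimes\mathrm{id}$ intertwines the two brackets. The main obstacle is bookkeeping: one must verify that computing $[\Psi,\Psi]$ inside $A\hat{\otimes}{\mathfrak m}_0$ agrees term by term with the formal cohomological computation done in Theorem \ref{main_theorem}, that is, that the coefficient of each $e_n$ in $J(e_k,e_l,e_m)$ is a well-defined element of $A$ lying in the ideal $(\{F_{j,q,r}\})$. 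The weight grading on ${\mathfrak m}_0$ again guarantees finiteness at each weight level and legitimizes the rearrangement of the Nijenhuis--Richardson summations.
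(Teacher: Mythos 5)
Your proposal is correct and follows exactly the route the paper intends: the paper states this proposition without proof as a direct reformulation of Theorem \ref{main_theorem}, and your reduction (closedness of $\Psi$ via adaptedness, Jacobi collapsing to $\tfrac{1}{2}[\Psi,\Psi]=\sum F_{j,q,r}\Psi_{j,q,r}$, which vanishes in $A$ by construction of the ideal) is precisely that reformulation, with the well-definedness of the formal series and the augmentation claim usefully made explicit.
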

The completed tensor product $A\hat{\otimes} {\mathfrak m}_0$ with
bracket$[,]_A$ is called a deformation of the algebra ${\mathfrak
m}_0$ with the base $A$. It is possible to call the corresponding
affine variety $M_{\it Fil}$ \cite{FialFu} the base of such
deformation. It is evident that $A\hat{\otimes} {\mathfrak m}_0$
is a residually nilpotent Lie $A$-algebra of maximal class. We
will not formulate here the versality condition in terms of formal
algebra referring for details to \cite{FialFu}. We will call also
the Lie $A$-algebra $A\otimes {\mathfrak m}_0$ as nilpotent versal
deformation of the algebra ${\mathfrak m}_0$.
\begin{remark}
let us extract three first equations of the system
(\ref{main_equations}):
\begin{equation}
\begin{split}
F_{2,3,0}=-3x^2_{3,0}{+}x_{3,0}x_{2,0}{+}2x_{2,0}x_{4,0}=0,\quad F_{3,4,0}={-}4x_{4,0}^2{+}3x_{4,0}x_{5,0}{+}3x_{3,0}x_{5,0}=0.\\
F_{2,4,0}=6x^2_{4,0}-4x_{3,0}x_{4,0}-x_{4,0}x_{5,0}+2x_{2,0}x_{5,0}-x_{3,0}x_{5,0}=0,\\
\end{split}
\end{equation}
Khakimdjanov in \cite{Kh} remarked that an affine variety, that is
determined by them in four-dimensional space with the coordinates
 $(x_{2,0},x_{3,0},x_{4,0},x_{5,0})$
is the union of three straight lines
$$
l_1=\{(t,0,0,0)\},
l_2=\left\{\left(t,\frac{t}{10},\frac{t}{70},\frac{t}{420}\right)\right\},
l_3=\{(0,0,0,t)\}.
$$
this observation allowed it to classify graded filiform Lie
algebras with one-dimensional homogeneous components in dimensions
 $n \ge 12$. although its initial list \cite{Kh} was not complete,
 in its later version \cite{Kh2} was also an algebra was missed, that was finally added in \cite{Mill1},
where the alternative classification of such algebras was carried
out. Let us note that non-zero points only of two first straight
lines continue to a solution $\Psi_0$ of entire system. Points of
the first straight line correspond to deformations with
$\Psi_0=t\Psi_{2,0}$ (they are isomorphic to ${\mathfrak m}_2$)
and the points of the second one correspond  to the cocycle
$\Psi_0=t\sum_{k=2}^{+\infty}6\frac{(k-2)!(k-1)!}{(2k-1)!}\Psi_{k,0}$
(they are isomorphic to $L_1$) and finally to the trivial solution
corresponds $\Psi_0=0$ and hence the algebra ${\mathfrak m}_0$.
this reasoning was repeated recently by  Wagemann and Fialowski
\cite{FialWag}.
\end{remark}
Let $\Psi=\Psi_0+\Psi_1+\Psi_2+\dots+\Psi_i+\dots$ be a
deformation of the algebra ${\mathfrak m}_0$ moreover
$\Psi_0=\Psi_1=\dots=\Psi_{s-1}=0$ and $\Psi_s \ne 0$ for some
integer $s \ge 0$.
\begin{definition}[\cite{Kh}] A cocycle
$\Psi_s$ is called {\it sill} cocycle  of deformation $\Psi$.
\end{definition}
A homogeneous cocycle $\Psi_s \in H^2_+({\mathfrak m}_0,{\mathfrak
m}_0)$ is sill cocycle for some deformation  $\Psi$ then and only
then $[\Psi_s,\Psi_s]=0$.
\begin{proposition}
Let $\Psi_s\in H^2_s({\mathfrak m}_0,{\mathfrak m}_0)$ be a sill
cocycle i.e. $[\Psi_s{,}\Psi_s]{=}0$. A Lie algebra ${\mathfrak
g}$ with the deformed bracket $[,]_{{\mathfrak m}_0}+\Psi_s$ has
the following structure of ${\mathbb N}$-graded Lie algebra with
trivial or one-dimensional homogeneous components:
$$
{\mathfrak g}=\bigoplus_{i=1}^{+\infty}{\mathfrak g}_i, \quad
\dim{\mathfrak g}_i=\left\{\begin{array}{l} 1, \; i=1, s{+}2, s{+}3,\dots; \\
0, \; 2\le i\le  s{+}1.
\end{array}\right.
$$
\end{proposition}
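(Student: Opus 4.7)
The plan is to exhibit a new $\mathbb{N}$-grading on the underlying vector space $\mathfrak{m}_0 = \langle e_1, e_2, \ldots \rangle$ with respect to which the deformed bracket $[\cdot,\cdot]_{\mathfrak{m}_0} + \Psi_s$ is homogeneous of degree zero; the claim about dimensions then follows by inspection. Preliminarily, note that $[\cdot,\cdot]_{\mathfrak{m}_0} + \Psi_s$ really is a Lie bracket: the cocycle condition $d\Psi_s = 0$ together with the sill hypothesis $[\Psi_s,\Psi_s] = 0$ gives the deformation equation (\ref{mdef}).

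The key observation is that $\Psi_s$ shifts the standard weight uniformly by $s$. Indeed, from the explicit formulas (\ref{formulirovka}) and (\ref{znachenia_cocycla}) for the basis cocycles $\Psi_{j,s}$, one reads off two properties: (a) each $\Psi_{j,s}$ is adapted, i.e., vanishes on every pair containing $e_1$ (since only $e^i$ with $i \ge 2$ appear in the exterior factors), so $\Psi_s(e_1, e_k) = 0$ for all $k$; and (b) for $k, m \ge 2$, each $\Psi_{j,s}(e_k,e_m)$ lands in $\langle e_{k+m+s}\rangle$, so $\Psi_s(e_k,e_m) \in \langle e_{k+m+s}\rangle$.

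With this in hand, define a new grading on the underlying vector space by
$$
D(e_1) = 1, \qquad D(e_i) = i + s \ \text{ for } i \ge 2.
$$
Two compatibility checks suffice. For the original bracket, $D(e_1) + D(e_i) = 1 + (i+s) = (i+1) + s = D(e_{i+1})$ for $i \ge 2$. For the deformation, the pair $(e_1, e_k)$ is handled by $\Psi_s(e_1, e_k) = 0$, while for $k, m \ge 2$ one has $D(e_k) + D(e_m) = (k+s) + (m+s) = (k+m+s) + s = D(e_{k+m+s})$, matching property (b) above. Consequently the deformed bracket is graded of degree zero with respect to $D$.

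Reading off the homogeneous components: the weight $1$ is spanned by $e_1$, the weights $2, \ldots, s+1$ are unoccupied (as $D(e_i) = i + s \ge s + 2$ for $i \ge 2$), and for each $i \ge 2$ the vector $e_i$ is the unique basis element of weight $s + i$. This yields exactly the prescribed dimensions. The entire argument rests on the weight-shift property of $\Psi_s$ established above, so I anticipate no substantive obstacle; the verification is essentially arithmetic.
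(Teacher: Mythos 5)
Your proof is correct and follows essentially the same route as the paper: the paper simply declares the grading ${\mathfrak g}_1=\langle e_1\rangle$, ${\mathfrak g}_i=\langle e_{i-s}\rangle$ for $i\ge s{+}2$ (i.e.\ your $D(e_i)=i+s$) and calls the verification obvious. You merely spell out the weight-shift property of $\Psi_s$ and the compatibility check explicitly, which is a faithful elaboration of the paper's one-line argument.
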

\begin{proof}
The grading is defined in a obvious way:
$$
{\mathfrak g}_1=\langle e_1 \rangle, \; {\mathfrak g}_i=\langle
e_{i{-}s} \rangle, i \ge s{+}2,$$ where $e_1,e_2,\dots,e_i,\dots$
is an infinite basis of the algebra ${\mathfrak m}_0$.
\end{proof}
The reverse is true also
\begin{proposition}
Let ${\mathfrak g}$ be an ${\mathbb N}$-graded Lie algebra such
that
$$
{\mathfrak g}=\bigoplus_{i=1}^{+\infty}{\mathfrak g}_i, \quad
[{\mathfrak g}_1,{\mathfrak g}_i] ={\mathfrak g}_{i{+}1}, i \ge
s{+}2, \quad
\dim{\mathfrak g}_i=\left\{\begin{array}{l} 1, \; i=1,  s{+}2, s{+}3,\dots; \\
0, \; 2\le i\le  s{+}1.
\end{array}\right.,
$$
for some integer $s \ge 0$. Then the Lie algebra ${\mathfrak g}$
is isomorphic to some deformation of ${\mathfrak m}_0$ defined by
a homogeneous cocycle $\Psi_s \in H^2_s({\mathfrak m}_0,{\mathfrak
m}_0)$ such that $[\Psi_s{,}\Psi_s]{=}0$.
\end{proposition}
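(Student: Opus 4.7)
The plan is to build an isomorphism from $\mathfrak{g}$ onto a deformation $[,]_{\mathfrak{m}_0}+\Psi_s$ by choosing a basis of $\mathfrak{g}$ adapted to the grading and then reading off $\Psi_s$ from the non-$e_1$ brackets. First I would pick a nonzero $e_1\in\mathfrak{g}_1$ and a nonzero $e_2\in\mathfrak{g}_{s+2}$. Since $[\mathfrak{g}_1,\mathfrak{g}_i]=\mathfrak{g}_{i+1}$ for $i\ge s+2$ and each such $\mathfrak{g}_i$ is one-dimensional, the map $\operatorname{ad} e_1$ sends a nonzero vector of $\mathfrak{g}_{s+i}$ to a nonzero vector of $\mathfrak{g}_{s+i+1}$. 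I can therefore define recursively $e_{i+1}:=[e_1,e_i]$ for $i\ge 2$; this simultaneously chooses basis vectors $e_i\in\mathfrak{g}_{s+i}$ and arranges the $\mathfrak{m}_0$-type relations $[e_1,e_i]=e_{i+1}$ for all $i\ge 2$.

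With this basis in hand, I would analyze the remaining brackets $[e_i,e_j]_{\mathfrak{g}}$ with $2\le i<j$. Their grading is $(s+i)+(s+j)=2s+i+j$, so they lie in $\mathfrak{g}_{2s+i+j}$, which is spanned by $e_{s+i+j}$. Hence $[e_i,e_j]_{\mathfrak{g}}=c_{ij}\,e_{s+i+j}$ for some scalars $c_{ij}\in\mathbb{K}$. I would then define a bilinear map $\Psi_s\colon\mathfrak{m}_0\otimes\mathfrak{m}_0\to\mathfrak{m}_0$ by
\begin{equation*}
\Psi_s(e_1,e_k)=0\quad(\forall k),\qquad \Psi_s(e_i,e_j)=c_{ij}\,e_{s+i+j}\quad(2\le i<j),
\end{equation*}
extended skew-symmetrically. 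By construction $\Psi_s\in C^2_{(s)}(\mathfrak{m}_0,\mathfrak{m}_0)$, and the Lie bracket of $\mathfrak{g}$, transported through our basis identification $\mathfrak{g}\cong\mathfrak{m}_0$ of underlying vector spaces, equals $[,]_{\mathfrak{m}_0}+\Psi_s$.

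It remains to verify that $\Psi_s$ is a cocycle and satisfies the quadratic relation. Since $\Psi_s(e_1,e_k)=0$, the earlier proposition on adapted deformations (the one establishing $d\Psi=0$ for $\Psi\in C^2_+(\mathfrak{m}_0,\mathfrak{m}_0)$ vanishing on $e_1$) applies verbatim, giving $d\Psi_s=0$ so that $\Psi_s$ represents a class in $H^2_s(\mathfrak{m}_0,\mathfrak{m}_0)$. Finally, the Jacobi identity for $[,]_{\mathfrak{m}_0}+\Psi_s$ is precisely the deformation equation $d\Psi_s+\tfrac{1}{2}[\Psi_s,\Psi_s]=0$, which by $d\Psi_s=0$ reduces to $[\Psi_s,\Psi_s]=0$; since the transported bracket is the Jacobi bracket of $\mathfrak{g}$, this identity holds automatically.

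The only delicate point is the basis normalization at the very beginning: I must make sure that the recursive rescaling $e_{i+1}:=[e_1,e_i]$ is well-defined (which uses one-dimensionality of $\mathfrak{g}_{s+i+1}$ together with $[\mathfrak{g}_1,\mathfrak{g}_{s+i}]=\mathfrak{g}_{s+i+1}$, both given in the hypotheses) and that after this choice the degree of $\Psi_s$ is exactly $s$, not smaller. The latter is immediate from the grading computation $\deg\Psi_s(e_i,e_j)=(2s+i+j)-(s+i)-(s+j)+s=s$, so no further work is needed there; everything else is bookkeeping.
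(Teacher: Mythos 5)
Your argument is correct and is exactly the direct check the paper leaves to the reader (its own proof reads only ``The obvious verification''): construct the adapted basis via $e_{i+1}:=[e_1,e_i]$, note that the remaining brackets land in one-dimensional homogeneous components so that $\Psi_s(e_i,e_j)=c_{ij}e_{s+i+j}$ is a cochain of weight exactly $s$, and extract $d\Psi_s=0$ and $[\Psi_s,\Psi_s]=0$ from the Jacobi identity of $\mathfrak g$. A minor streamlining: for $s\ge 1$ the two terms of the deformation equation $d\Psi_s+\tfrac{1}{2}[\Psi_s,\Psi_s]=0$ have distinct weights $s$ and $2s$ and hence vanish separately, so the appeal to the earlier closedness proposition for adapted cochains is only genuinely needed in the case $s=0$.
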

\begin{proof}
The obvious verification.
\end{proof}
${\mathbb N}$-graded Lie algebras considered in two previous
propositions we will call  {\it ${\mathbb N}$-graded Lie algebras
with lacunas in the grading.  it is easy to construct examples of
this type from algebras already examined.
\begin{proposition}
Let ${\mathfrak g}=\bigoplus_{i=1}^{+\infty}{\mathfrak g}_i$ be an
${\mathbb N}$-graded Lie algebra then its subalgebra  ${\mathfrak
g}(s)={\mathfrak g}_1 \oplus
\bigoplus_{i=s{+}2}^{+\infty}{\mathfrak g}_i$ with $s \ge 1$ is
${\mathbb N}$-graded Lie algebra with lacunas in the grading from
$2$ to $s{+}1$.
\end{proposition}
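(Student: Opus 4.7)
The plan is to verify three assertions: that $\mathfrak{g}(s)$ is actually a Lie subalgebra of $\mathfrak{g}$, that the induced decomposition is a genuine $\mathbb{N}$-grading, and that the homogeneous components vanish in the claimed range. The first of these is the only non-trivial point; the other two then follow by inspection.

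For closure under the bracket, I would argue by cases on the degrees of homogeneous elements $x \in \mathfrak{g}_i$ and $y \in \mathfrak{g}_j$ with $i, j \in \{1\} \cup \{k : k \ge s{+}2\}$, using the compatibility $[\mathfrak{g}_i, \mathfrak{g}_j] \subset \mathfrak{g}_{i+j}$ that comes from $\mathfrak{g}$ being $\mathbb{N}$-graded. The three non-trivial cases are: $i = j = 1$, giving $[x,y] \in \mathfrak{g}_2$; $i = 1$, $j \ge s{+}2$, giving $[x,y] \in \mathfrak{g}_{j+1}$ with $j{+}1 \ge s{+}3$; and $i, j \ge s{+}2$, giving $[x,y] \in \mathfrak{g}_{i+j}$ with $i{+}j \ge 2s{+}4 \ge s{+}2$. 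The second and third cases manifestly land back in $\mathfrak{g}(s)$, so the only hurdle is the first: one must arrange that $[\mathfrak{g}_1, \mathfrak{g}_1] = 0$, which is the main obstacle. In the context where this proposition is applied (algebras of maximal class and $\mathbb{N}$-graded algebras with one-dimensional homogeneous components, as introduced just above), $\dim \mathfrak{g}_1 = 1$ so that $[\mathfrak{g}_1, \mathfrak{g}_1] = 0$ automatically; I would state this hypothesis explicitly in the proof or invoke the ambient assumption.

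Once closure is established, the grading of $\mathfrak{g}(s)$ is defined as the restriction:
\begin{equation*}
(\mathfrak{g}(s))_1 = \mathfrak{g}_1, \quad (\mathfrak{g}(s))_i = 0 \text{ for } 2 \le i \le s{+}1, \quad (\mathfrak{g}(s))_i = \mathfrak{g}_i \text{ for } i \ge s{+}2.
\end{equation*}
The bracket compatibility $[(\mathfrak{g}(s))_i, (\mathfrak{g}(s))_j] \subset (\mathfrak{g}(s))_{i+j}$ is inherited from that of $\mathfrak{g}$ together with the case analysis above: the only index sums $i{+}j$ that occur are $i{+}j = 2$ (which gives $0$ by the $\mathfrak{g}_1$ hypothesis), $i{+}j \ge s{+}3$, or $i{+}j \ge 2s{+}4$, all of which land in the prescribed components of $\mathfrak{g}(s)$.

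Finally, the lacuna statement is immediate from the definition of $\mathfrak{g}(s)$: by construction, the homogeneous components in degrees $2, 3, \dots, s{+}1$ are zero, while the degree-$1$ component and all degrees $\ge s{+}2$ agree with those of $\mathfrak{g}$. Thus $\mathfrak{g}(s)$ fits the definition of an $\mathbb{N}$-graded Lie algebra with lacunas in the grading from $2$ to $s{+}1$, completing the proof.
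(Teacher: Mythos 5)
The paper states this proposition without any proof at all, so there is nothing to compare your argument against; your write-up is essentially the verification the author left to the reader, and it is correct. Your case analysis on degrees is the right (and really the only) way to check closure, and the two cases $i=1,\,j\ge s{+}2$ and $i,j\ge s{+}2$ do land back in $\mathfrak{g}(s)$ exactly as you say.

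The one substantive point you raise is genuine and worth emphasizing: as literally stated, for an arbitrary $\mathbb{N}$-graded Lie algebra the claim is false, because $[\mathfrak{g}_1,\mathfrak{g}_1]\subset\mathfrak{g}_2$ and $\mathfrak{g}_2$ is excluded from $\mathfrak{g}(s)$ whenever $s\ge 1$. (The paper itself contains a counterexample to the unqualified statement: $\mathfrak{m}_0(n)$ with the grading $\mathfrak{g}_1=\langle e_1,e_2\rangle$ from the remark after Theorem \ref{V_ne1} has $[\mathfrak{g}_1,\mathfrak{g}_1]=\langle e_3\rangle=\mathfrak{g}_2\ne 0$.) In the context where the proposition is used --- the algebras $L_1$, $\mathfrak{m}_0$, $\mathfrak{m}_2$ with one-dimensional homogeneous components, as in Theorem \ref{fialZ} and the two propositions immediately preceding --- one has $\dim\mathfrak{g}_1=1$, hence $[\mathfrak{g}_1,\mathfrak{g}_1]=0$ and your argument goes through. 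Your decision to state this hypothesis explicitly is the correct fix; it repairs an imprecision in the proposition itself rather than a gap in your proof. The remaining verifications (that the restricted grading is a grading and that the components in degrees $2,\dots,s{+}1$ vanish) are immediate, as you note.
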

The subalgebra $L_1(s)$ will be isomorphic to a semi-direct sum
${\mathbb K}\oplus L_{s{+}2}$. Obviously that${\mathfrak m}_0(s)
\cong {\mathfrak m}_2(s) \cong {\mathfrak m}_0$ with $s \ge 1$.
But not all algebras with lacunas can be obtained   by the
deletion of the basic vectors from some other algebra.
\begin{example}
A ${\mathbb N}$-graded Lie algebra ${\mathfrak m}_k$ is defined by
its infinite basis $e_1, e_k, e_{k{+}1}, \dots, e_i, \dots$ and
the structure relations:
$$
[e_1,e_i]=e_{i{+}1}, \; i \ge k,\quad
[e_k,e_i]=e_{k{+}i},\; i \ge k{+}1.
$$
The algebra  ${\mathfrak m}_k$ is isomorphic to the deformation of
${\mathfrak m}_0$ defined by the sill cocycle $\Psi_{2,k{-}2}$.
\end{example}
The question: how much exists the sill cocycles  $\Psi_s$ of the
grading equal to $s$? Let us try to find an answer in the case of
$s=2$. We will search for the solution of the system
(\ref{main_equations}) such that $x_{j,s}=0$ with $s \ne 2$. Let
us write down seven equations (\ref{main_equations}) on unknowns
$x_{2,2},x_{3,2},\dots, x_{8,2}$ moreover for simplification in
the record we will omitt the second upper script in the unknowns:
instead of $x_{j,2}$ we will write just $x_{j}$.
\begin{equation}
\label{syst_dve-dirki}
\begin{split}
F_{2,3,4}=5x_3^2-4x_2x_4+2x_2x_5-6x_3x_4+x_3x_5=0;\\
F_{2,4,4}=-15x_4^2+2x_2x_6-4x_2x_5+6x_3x_4+10x_4x_5-x_4x_6+3x_3x_5-x_3x_6=0;\\
F_{3,4,4}=6x_4^2-5x_3x_5+5x_3x_6+4x_4x_6-10x_4x_5=0;\\
F_{2,5,4}=35x_5^2-4x_2x_6+2x_2x_7+7x_3x_5+7x_3x_6-3x_3x_7-15x_5x_6+x_5x_7=0;\\
F_{3,5,4}=-21x_5^2-5x_3x_6+5x_3x_7+7x_4x_5+4x_4x_6-3x_4x_7+20x_5x_6-5x_5x_7=0;\\
F_{4,5,4}=7x_5^2-6x_4x_6+9x_4x_7-2x_4x_8-15x_5x_6+10x_5x_7-x_5x_8=0;\\
F_{3,6,4}=56x_6^2-5x_3x_7+5x_3x_8+9x_4x_7-8x_4x_8+8x_4x_6-36x_5x_6-35x_6x_7+6x_6x_8=0.\\
\end{split}
\end{equation}
After computing with $Maple$ we will obtain the following
collection of solutions of the system (\ref{syst_dve-dirki}):
\begin{equation}
\begin{split}
\{(t,0,0,0,0,0,0)\}, \{(0,0,0,0,0,u,t)\},
\{(0,0,0,0,t,4t,14t)\},\\
\left\{\left(\frac{t}{70},\frac{t}{420},\frac{t}{2310},\frac{t}{12012},
\frac{t}{60060},\frac{t}{291720},\frac{t}{1385670}\right)\right\}.
\end{split}
\end{equation}
One can show that only nontrivial points of the first and last
straight lines from this collection are extendable to a solution
of entire infinite system. The corresponding deformations are
isomorphic to algebras ${\mathfrak m}_4$ and $L_1(2)$. It follows
evidently from this proposition that up to an isomorphism there
are only three ${\mathbb N}$-graded Lie algebras of maximal class
with a lacuna in the grading equal to $2$: $L_1(1)$, ${\mathfrak
m}_3$ and ${\mathfrak m}_0$. The last statement was obtained
directly using computer calculations in \cite{FialWag}. In the
light of the Theorem  \ref{fialZ} and non published results of
computer calculations by Vaughan-Lee (see \cite{ShZ2} page 956),
the following conjecture is completely plausible
\begin{conjecture} Let ${\mathfrak g}$ be a ${\mathbb
N}$-graded Lie algebra such that
$$ {\mathfrak
g}=\bigoplus_{i=1}^{+\infty}{\mathfrak g}_i, \quad [{\mathfrak
g}_1,{\mathfrak g}_i] ={\mathfrak g}_{i{+}1}, i \ge k,\quad
\dim{\mathfrak g}_i=\left\{\begin{array}{l} 1, \; i=1, k, k{+}1,\dots; \\
0, \; 2\le i\le  k{-}1.
\end{array}\right..
$$
Then ${\mathfrak g}$ is isomorphic to one from three given
algebras
$$
{\mathfrak m}_0, \quad L_1(k{-}2), \quad {\mathfrak m}_k.
$$
\end{conjecture}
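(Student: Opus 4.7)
The plan is to reformulate the conjecture as a classification problem for homogeneous sill cocycles of fixed weight $s = k-2$ and then to analyze the corresponding specialization of the quadratic system (\ref{main_equations}). By the proposition preceding the conjecture, any $\mathbb{N}$-graded Lie algebra $\mathfrak{g}$ satisfying the lacuna hypothesis is isomorphic to a deformation $[\cdot,\cdot]_{\mathfrak{m}_0} + \Psi_s$ of $\mathfrak{m}_0$ by a single homogeneous cocycle $\Psi_s \in H^2_s(\mathfrak{m}_0, \mathfrak{m}_0)$ satisfying $[\Psi_s, \Psi_s] = 0$. Expanding $\Psi_s = \sum_{j \geq 2} x_j \Psi_{j,s}$ in the basis of Theorem \ref{main} (abbreviating $x_j := x_{j,s}$), the fact that both factors in $[\Psi_s, \Psi_s]$ carry weight $s$ reduces the integrability condition to the subsystem $F_{j,q,2s} = 0$ of (\ref{main_equations}): an infinite family of quadratic relations in the $x_j$ alone. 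By Proposition \ref{isom_lemma} applied to the scaling $\Psi_i \mapsto \alpha^i \Psi_i$, isomorphism classes correspond exactly to projective solutions of this system.

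Next I would exhibit three projective solution families matching the three candidate algebras. The trivial solution $\Psi_s = 0$ yields $\mathfrak{m}_0$. The one-parameter family $x_2 = t$, $x_j = 0$ for $j > 2$ produces $t \Psi_{2,s}$; a direct computation from (\ref{znachenia_cocycla}) shows that the only new nontrivial brackets are those of the form $[e_2, e_m]$, and after the regrading of the preceding proposition they reproduce exactly the defining relations of $\mathfrak{m}_k$. The third family generalizes the explicit $L_1$-type cocycle already exhibited in the paper for $s = 0$ (coefficients $x_j \propto (j-2)!(j-1)!/(2j-1)!$) and for $s = 2$ (obtained computationally following (\ref{syst_dve-dirki})); one expects a closed-form expression in factorials or gamma values which, after a diagonal rescaling of the basis, realises $L_1(k-2) \cong \mathbb{K} \oplus L_k$ as a subalgebra of the Witt algebra. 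Verifying that each of these three families satisfies every $F_{j,q,2s} = 0$ is a direct substitution using standard binomial identities.

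The main obstacle is uniqueness: proving that no further projective solutions exist. I would proceed by induction on $j$. The equation $F_{2,q,2s} = 0$, read for successive values of $q$, should express $x_{q+2}$ as a rational function of $x_2, \ldots, x_{q+1}$ whenever the leading coefficient does not vanish, so the whole tail $(x_j)_{j \geq 2}$ is controlled by a finite initial segment whose length depends on $s$. The remaining equations $F_{j,q,2s} = 0$ with $j \geq 3$ then cut out this initial segment, and one must show --- in the spirit of the Maple calculation performed for $s = 2$ in the example following (\ref{syst_dve-dirki}) --- that exactly three projective loci survive. The genuine difficulty lies in executing this analysis uniformly in $s$ rather than case by case: it requires identifying the right recursive structure inside the triple sums of (\ref{main_equations}), ruling out the degenerate $q$ for which the pivoting coefficient vanishes, and showing that the only consistent continuations of a nonzero initial segment are the two non-trivial families identified above. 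The known classifications for $s = 0$ (Theorem \ref{fialZ}) and $s = 2$, together with the Vaughan-Lee computations cited in the introduction, provide a template and a sanity check, but the passage to arbitrary $s$ is where a genuinely new combinatorial argument --- most likely a careful binomial manipulation tailored to the trapezoidal summation regions appearing in $F_{j,q,2s}$ --- will be required.
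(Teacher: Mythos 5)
The statement you are trying to prove is explicitly labelled a \emph{Conjecture} in the paper: the author offers no proof, only supporting evidence (the case $s=0$ settled by Theorem \ref{fialZ}, the case $s=2$ settled by the Maple computation following the system (\ref{syst_dve-dirki}), and unpublished computer calculations of Vaughan-Lee cited from \cite{ShZ2}). So there is no argument in the paper to compare yours against, and your proposal should be judged on its own. Its setup is sound and consistent with the paper's framework: the reduction to a single homogeneous sill cocycle $\Psi_s$ with $s=k-2$ via the two propositions on lacunas is correct, and your weight bookkeeping is right --- with all second indices frozen at $s$, only the terms $t=s$ survive in (\ref{main_equations}) and the relevant subsystem is indeed $F_{j,q,2s}=0$. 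One small overstatement: Proposition \ref{isom_lemma} gives only that proportional cocycles yield isomorphic algebras, not that isomorphism classes ``correspond exactly'' to projective solutions; fortunately only the forward direction is needed here.

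The genuine gap is exactly where you locate it, but it is the entire content of the conjecture, so the proposal does not amount to a proof. Two things are missing. First, the third family is not actually exhibited: you ``expect'' a closed-form $L_1(k-2)$-type solution generalizing the factorial coefficients known for $s=0$ and the numerical ones found for $s=2$, but you neither write it down nor verify that it satisfies all $F_{j,q,2s}=0$; this existence half is nontrivial (the $s=2$ denominators $70,420,2310,\dots$ already hint at a ratio-of-factorials pattern, but it must be identified and checked against the triple binomial sums). Second, and more seriously, the uniqueness argument is only a programme: the claim that $F_{2,q,2s}=0$ recursively determines $x_{q+2}$ from earlier unknowns requires the pivot coefficient (a specific binomial expression) to be nonzero, and you give no control over when it vanishes, nor a uniform-in-$s$ mechanism forcing exactly three projective loci to survive the remaining equations $F_{j,q,2s}=0$, $j\ge 3$. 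Since the paper's own evidence is purely computational for $s\le 2$, and since you yourself state that ``a genuinely new combinatorial argument will be required,'' the proposal is an honest research plan rather than a proof; the statement remains open.
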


\section{The finite dimensional case}

In this section we will consider the affine variety  $M_{\it
Fil}(n)$ of $n$-dimensional filiform Lie algebras. One has
immediately to note that the structure of the system equations
that define $M_{\it Fil}(n)$ will depend on parity of $n$.

We will need  a finite dimensional version of the Theorem
\ref{main}:
\begin{theorem}
1) as the basis of the graded space
$H^2_+(\mathfrak{m}_0(n),\mathfrak{m}_0(n))=\oplus_{s \ge
0}^{n{-}5} H^2_s(\mathfrak{m}_0(n),\mathfrak{m}_0(n))$ one can
take the following collection of cocycles:
\begin{equation} \begin{split}
\Psi_{j,s}=\sum_{k=0}^{n{-}2j{-}1{-}s}e_{2j{+}1{+}s{+}k} \otimes
D_{-1}^k\omega(e^j {\wedge}e^{j{+}1}), \; 2\le j, \;
s \ge 0; \; 2j{+}1{+}s\le n.\\
\end{split}
\end{equation}

2) The following collection of cocycles
\begin{equation}
\begin{split}
\Psi_{i,j,s}=\sum_{k=0}^{n{-}i{-}2j{-}1{-}s}e_{i{+}2j{+}1{+}s{+}k}
\otimes D_{-1}^k\omega(e^i{\wedge}e^j {\wedge}e^{j{+}1}), \; 2\le
i <j, \; s \ge 0, \; i{+}2j{+}1{+}s\le n.
\end{split}
\end{equation}
can serve as a basis of the graded space
$H^3_+(\mathfrak{m}_0(n),\mathfrak{m}_0(n))=\oplus_{s \ge
0}^{n{-}9} H^3_s(\mathfrak{m}_0(n),\mathfrak{m}_0(n))$.
\end{theorem}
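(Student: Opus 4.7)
My plan is to identify each proposed cochain as a truncation of the corresponding infinite-dimensional cocycle from Theorem \ref{main}, verify that the truncation preserves closedness, establish linear independence in cohomology by the distinguishing-value characterizations \eqref{property_Psi_1} and \eqref{property_Psi_2}, and match the count with Vergne's dimension formulas.

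The cochain $\Psi^{(n)}_{j,s}$ on $\mathfrak{m}_0(n)$ (and analogously $\Psi^{(n)}_{i,j,s}$) is obtained from the infinite cocycle $\Psi_{j,s}$ by composition with the projection $\pi_n\colon \mathfrak{m}_0\to\mathfrak{m}_0(n)$ on the coefficient side; this yields precisely the truncation $0\le k\le n{-}2j{-}1{-}s$ displayed in the statement. Closedness in $\mathfrak{m}_0(n)$ needs to be checked only on triples containing $e_1$, because on a triple $(e_a,e_b,e_c)$ with $a,b,c\ge 2$ every bracket $[e_a,e_b]$ and every $[e_a,\Psi_{j,s}(e_b,e_c)]$ vanishes, since $\Psi_{j,s}$ takes values in the span of $e_m$ with $m\ge 2$ and all brackets among basis vectors of index $\ge 2$ are zero in $\mathfrak{m}_0(n)$. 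On a triple $(e_1,e_a,e_b)$ with $2\le a<b$, adaptedness $\Psi_{j,s}(e_1,\cdot)=0$ reduces the cocycle identity to
\[
\Psi_{j,s}(e_{a+1},e_b)+\Psi_{j,s}(e_a,e_{b+1})=[e_1,\Psi_{j,s}(e_a,e_b)],
\]
which is known to hold in $\mathfrak{m}_0$ and remains valid after truncation: whenever the right side is killed because $[e_1,\Psi_{j,s}(e_a,e_b)]$ would land on $e_{n+1}$, the corresponding terms on the left also fall outside the truncation range $k\le n{-}2j{-}1{-}s$, so both sides agree in $\mathfrak{m}_0(n)$. The same type of argument, applied to quadruples containing $e_1$, handles the $3$-cocycles $\Psi^{(n)}_{i,j,s}$.

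For linear independence in cohomology, fix weight $s$ and suppose $\sum_j c_j\Psi^{(n)}_{j,s}=d\varphi$ for some $\varphi\in C^1_s(\mathfrak{m}_0(n),\mathfrak{m}_0(n))$. For any $k\ge 2$, evaluation on $(e_k,e_{k+1})$ gives $c_k\, e_{2k+1+s}$ on the left, while $d\varphi(e_k,e_{k+1})=[e_k,\varphi(e_{k+1})]-[e_{k+1},\varphi(e_k)]-\varphi([e_k,e_{k+1}])=0$, since $\varphi(e_k)$ and $\varphi(e_{k+1})$ lie in the span of basis vectors of index $\ge 2$, all brackets among such vanish, and $[e_k,e_{k+1}]=0$. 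Thus each $c_k$ is forced to be zero, and the classes $[\Psi^{(n)}_{j,s}]$ are linearly independent. The identical argument, using the distinguishing triples $(e_i,e_j,e_{j+1})$ together with \eqref{property_Psi_2}, proves cohomological independence of the $\Psi^{(n)}_{i,j,s}$.

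The remaining ingredient is that the count of proposed basis elements equals Vergne's dimensions $\dim H^2_+(\mathfrak{m}_0(n),\mathfrak{m}_0(n))$ and $\dim H^3_+(\mathfrak{m}_0(n),\mathfrak{m}_0(n))$; combined with the linear independence already established, this will promote the collection to a basis. The main remaining obstacle is therefore the combinatorial matching with Vergne's formulas \cite{V}: for $H^2_+$ one enumerates pairs $(j,s)$ with $j\ge 2$, $s\ge 0$, $2j{+}1{+}s\le n$, which, as noted in Remark \ref{khakim}, reproduces Khakimdjanov's basis \cite{Kh}; the $H^3_+$ case is strictly parallel, counting triples $(i,j,s)$ with $2\le i<j$, $s\ge 0$, $i{+}2j{+}1{+}s\le n$. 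Once these counts are confirmed to match Vergne's numbers, the two parts of the theorem follow at once.
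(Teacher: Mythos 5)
Your plan for the first half of the work --- that the truncated cochains are closed and that their cohomology classes are linearly independent --- is sound, and is in fact more explicit than what the paper offers: the evaluation of $d\varphi$ on the distinguishing pairs $(e_k,e_{k+1})$ and triples $(e_i,e_j,e_{j+1})$ does vanish for every cochain $\varphi$ of non-negative weight, because all brackets among basis vectors of index $\ge 2$ are zero in $\mathfrak{m}_0(n)$, so no nontrivial combination of the $\Psi^{(n)}_{j,s}$ (resp.\ $\Psi^{(n)}_{i,j,s}$) can be a coboundary. The truncation argument for closedness also goes through, since each value $\Psi_{j,s}(e_a,e_b)$ is a scalar multiple of a single basis vector, and $[e_1,e_n]=0$ in $\mathfrak{m}_0(n)$ exactly when the corresponding terms on the other side of the cocycle identity are cut off by the truncation $k\le n{-}2j{-}1{-}s$.

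The genuine gap is in the passage from ``linearly independent'' to ``basis''. For part 1 the needed upper bound on $\dim H^2_+(\mathfrak{m}_0(n),\mathfrak{m}_0(n))$ is indeed available from Vergne \cite{V}, which is why the paper can dispose of part 1 as a reformulation of her theorem via Remark \ref{khakim}. But for part 2 there is no Vergne formula to match against: as the paper itself states, Vergne did not compute $H^3(\mathfrak{m}_0(n),\mathfrak{m}_0(n))$, and the count $\dim H^3_+(\mathfrak{m}_0(n),\mathfrak{m}_0(n))=\sum_{r=3}^{n-6}P_3(r)$ that appears later in the paper is derived \emph{from} this theorem (by enumerating the cocycles $\Psi_{i,j,s}$), not available as an independent input to it. Your argument therefore only establishes the inequality $\dim H^3_+(\mathfrak{m}_0(n),\mathfrak{m}_0(n))\ge \#\{(i,j,s)\}$; to obtain equality one must actually compute the non-negative part of $H^3$, which the paper does by asserting that the spectral-sequence computation of \cite{Mill3}, carried out for the infinite-dimensional algebra $\mathfrak{m}_0$, remains valid in the finite-dimensional non-negative-weight setting. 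Without that step, or some other proof that the listed cocycles span $H^3_+(\mathfrak{m}_0(n),\mathfrak{m}_0(n))$, part 2 is not proved; this matters for the rest of the paper because the number of equations defining $M_{\it Fil}(n)$ is read off from precisely this basis.
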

\begin{proof}
The first part of the theorem is just a reformulation of the
Vergne theorem on two-cohomology with the coefficients in the
adjoint representation of the algebra ${\mathfrak m}_0(n)$
\cite{V} (see also the Remark \ref{khakim}). Vergne have not
calculated the three-cohomology
$H^3(\mathfrak{m}_0(n),\mathfrak{m}_0(n))$. For the non-negative
part $H^3_+(\mathfrak{m}_0(n),\mathfrak{m}_0(n))$ the computation
by means of the spectral sequence \cite{Mill3} is still valid.
\end{proof}
We recall that according the Theorem  \ref{V_ne1} the following
holds on
\begin{proposition}
\label{neotr_def_fin_dim} An arbitrary filiform Lie algebra
 ${\mathfrak g}$ of odd dimension $n=2k{+}1$ is isomorphic to
 some non-negative deformation $\Psi=\Psi_0+\Psi_1+\Psi_2+\dots+\Psi_{n{-}5}$
of the graded Lie algebra ${\mathfrak m}_0(n)$. In the case of
even dimension $n=2k$ a filiform Lie algebra is non-negative
deformation of: either a) the graded Lie algebra ${\mathfrak
m}_0(n)$, either b) the graded Lie algebra ${\mathfrak m}_1(2k)$.

In each case the deformation  $\Psi$ can be chosen adapted:
\begin{equation}
\label{kogranica} \Psi(e_1, e_k)=0, \forall k \in {\mathbb N}.
\end{equation}
\end{proposition}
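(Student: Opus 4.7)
The proof follows the same scheme as Proposition \ref{neotr_def}, substituting Vergne's finite-dimensional classification Theorem \ref{V_ne1} for the infinite-dimensional Theorem \ref{fialZ}. The plan is to identify the associated graded algebra, lift a basis, collect the higher-weight terms of the bracket into the deformation cocycle $\Psi$, and then normalize the basis so as to achieve adaptedness.

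First I would form the associated graded algebra ${\rm gr}_C\mathfrak{g}$ with respect to the canonical filtration of a filiform Lie algebra $\mathfrak{g}$ of dimension $n$. By the proposition on filiform Lie algebras recalled in Section~1, ${\rm gr}_C\mathfrak{g}$ is an $\mathbb{N}$-graded filiform Lie algebra with $\dim({\rm gr}_C\mathfrak{g})_1=2$ and $\dim({\rm gr}_C\mathfrak{g})_i=1$ for $i=2,\ldots,n-1$, which is exactly the hypothesis of Theorem \ref{V_ne1}. That theorem now forces ${\rm gr}_C\mathfrak{g}$ to be isomorphic to $\mathfrak{m}_0(n)$ when $n$ is odd, and to one of $\mathfrak{m}_0(2k)$ or $\mathfrak{m}_1(2k)$ when $n=2k$; this delivers the odd/even alternative (a)/(b) of the statement.

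Next I would lift a homogeneous basis of ${\rm gr}_C\mathfrak{g}$ to representatives $e_1,\ldots,e_n$ of $\mathfrak{g}$ with $e_2\notin C^2\mathfrak{g}$ and $e_i\in C^{i-1}\mathfrak{g}\setminus C^{i}\mathfrak{g}$ for $i\ge 3$. Writing the brackets of $\mathfrak{g}$ in this lifted basis yields $[\,\cdot\,,\,\cdot\,]_{\mathfrak{g}}=[\,\cdot\,,\,\cdot\,]_0+\Psi$, where $[\,\cdot\,,\,\cdot\,]_0$ is the model bracket ($\mathfrak{m}_0(n)$ or $\mathfrak{m}_1(2k)$) and the correction $\Psi$ lands in strictly deeper filtration. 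Decomposing $\Psi$ into homogeneous components relative to the natural grading $\deg e_i=i$ produces $\Psi=\Psi_0+\Psi_1+\Psi_2+\ldots$. The truncation $\Psi_s=0$ for $s>n-5$ is immediate from the finite-dimensional basis of $H^2_+$ recalled in the preceding theorem: every $\Psi_{j,s}$ has weight $2j+1+s$, which must not exceed $n$, and since $j\ge 2$ this forces $s\le n-5$.

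Finally I would arrange the adaptedness condition $\Psi(e_1,e_k)=0$ by a triangular change of basis. Each $e_k$ for $k\ge 3$ is defined only modulo $C^{k}\mathfrak{g}$, so it may be replaced by $e_k+\sum_{l>k}\alpha_{k,l}e_l$ in order to absorb the deviation $[e_1,e_{k-1}]_{\mathfrak{g}}-e_k$ into the normalization. Since these adjustments are strictly upper-triangular in the filtration, they preserve the grading $\deg e_i=i$ of $\Psi$ and hence its decomposition into homogeneous summands of non-negative weight. The main technical point is to verify that this iterative normalization terminates after finitely many steps and does not reintroduce $e_1$-directional contributions at a deeper level of the filtration; this works exactly as in the mechanism used in Corollary \ref{pos_def} and Proposition \ref{neotr_def}, the only genuine novelty being the even-dimensional bifurcation inherited from Vergne's theorem.
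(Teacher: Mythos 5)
Your proposal is correct and follows essentially the same route as the paper: the paper offers no written proof beyond the phrase that the proposition holds ``according to Theorem \ref{V_ne1}'', exactly mirroring how Proposition \ref{neotr_def} is dispatched as a reformulation of Theorem \ref{fialZ}, and your argument (pass to ${\rm gr}_C\mathfrak{g}$, invoke Vergne's classification, lift a filtration-adapted basis, and normalize $e_{k+1}=[e_1,e_k]$ to get adaptedness and the weight bound $s\le n-5$) is precisely the standard unwinding of that one-line reference. No gap.
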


The Lie algebra ${\mathfrak m}_1(2k)$ can be regarded as
deformation ${\mathfrak m}_0(2k)$:
$$
[,]_{{\mathfrak m}_1(2k)}=[,]_{{\mathfrak m}_0(2k)}+\Psi_{k,{-}1},
$$
 $\Psi_{k,{-}1} \in H^2_{-1}({\mathfrak m}_0(2k),{\mathfrak
m}_0(2k))$ is defined by the formula
$$
\Psi_{k,{-}1}=e_{2k} \otimes \omega(e^k {\wedge}e^{k{+}1}).
$$
let us combine both cases into one, considering with $n=2k$
deformations $\Psi$ of the algebra ${\mathfrak m}_0(2k)$
$$
\Psi=x\Psi_{k,{-}1}+\Psi_0+\Psi_1+\Psi_2+\dots+\Psi_{n{-}5},
$$
where the variable $x$ can take only two values: $x=1$ (and we get
non-negative deformations of the algebra ${\mathfrak m}_1(2k)$) or
$x=0$ (we will deal with deformations of the algebra ${\mathfrak
m}_0(2k)$).

\begin{theorem}
\label{main_konechnomer} 1) Let $n=2k{+}1 \ge 9$. Let us consider
a linear combination
$$\Psi=\sum_{s=0}^{n{-}5}\sum_{j=2}^{\left[ \frac{n{-}1}{2}\right]}x_{j,s}\Psi_{j,s},$$
of basic cocycles  $\Psi_{j,s}$ from the subspace $\oplus_{s \ge
0}^{n{-}5} H^2_s(\mathfrak{m}_0(n),\mathfrak{m}_0(n))$. A cocycle
$\Psi$ satisfies the deformation equation if and only if its
coordinates $x_{j,s}$ satisfy the finite system of quadratic
equations
\begin{equation}
\label{sluchay_nechetnoy_dim} \left\{F_{j,q,r}=0, \quad 2 \le j <
q, \; 9 \le j{+}2q{+}1{+}r\le n, \; r \ge 0.\right.
\end{equation}
2) Let $n=2k \ge 10$. Let us consider a linear combination
$$\Psi=\sum_{s=0}^{n{-}5}\sum_{j=2}^{\left[ \frac{n{-}1}{2}\right]}x_{j,s}\Psi_{j,s}.$$
A cocyle $x\Psi_{k,{-}1}+\Psi$ satisfies the deformation equation
if and only if its coordinates $x_{j,s}$ with respect to the basis
$\Psi_{j,s}$ satisfy the following system of equations
\begin{equation}
\label{sluchay_chetnoy_dim}
\left\{
\begin{array}{ll}
F_{j,q,r}=0, & 2 \le j < q, \; 9 \le j{+}2q{+}1{+}r < 2k, \; r
\ge 0, \\
\tilde F_{j,q,r}=F_{j,q,r}+(-1)^{k{-}j{-}q}xG_{j,q,r}=0, & 2 \le j
< q,\; j{+}2q{+}1{+}r {=}2k, \; r
\ge 0, \\
\tilde F_{j,q,{-}1}=xG_{j,q,{-}1}=0, & 2 \le j < q, \; j{+}2q
{=}2k.
\end{array}\right.
\end{equation}
where the polynomials $F_{j,q,r}$ are defined by the formulas
(\ref{main_equations}) and the polynomial $G_{j,q,r}$ with $r \ge
-1$ is defined by the following
\begin{equation}
\begin{split}
G_{j,q,r}=\sum_{l{=}j}^{\left[\frac{j{+}q{-}1}{2}\right]}
({-}1)^{l} \binom{q{-}l{-}1}{l{-}j}x_{l,r{+}1}
+\sum_{l{=}j}^{\left[\frac{j{+}q}{2}\right]} ({-}1)^{l}
\binom{q{-}l}{l{-}j}x_{l,r{+}1} -(-1)^qx_{q,r{+}1}.
\end{split}
\end{equation}
\end{theorem}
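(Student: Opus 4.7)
The plan is to mimic the proof of Theorem \ref{main_theorem}, specializing to the finite-dimensional bases of $H^2_+(\mathfrak{m}_0(n),\mathfrak{m}_0(n))$ and $H^3_+(\mathfrak{m}_0(n),\mathfrak{m}_0(n))$ just described, and then to treat separately the extra weight-$(-1)$ cocycle $\Psi_{k,-1}$ that occurs only in even dimension.

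For part 1), when $n=2k+1$, the adapted cocycle $\Psi=\sum x_{j,s}\Psi_{j,s}$ is closed by the same argument as in the proposition preceding (\ref{sysdef_2}), so the deformation equation reduces to $[\Psi,\Psi]=0$. Since $[\Psi,\Psi]$ is itself closed and vanishes on every triple $(e_1,e_j,e_l)$, it is uniquely represented in the basis $\{\Psi_{j,q,r}\}$ of $H^3_+(\mathfrak{m}_0(n),\mathfrak{m}_0(n))$. Evaluating on the distinguished triples $(e_j,e_q,e_{q+1})$ reproduces the calculation of Theorem \ref{main_theorem} verbatim, because the six Nijenhuis--Richardson summands of (\ref{skobka_basisa}) involve only $e_m$'s with $m\le n$ inside the stated index ranges. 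The sole change is that $(j,q,r)$ is now truncated by $j+2q+1+r\le n$, producing (\ref{sluchay_nechetnoy_dim}).

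For part 2), when $n=2k$, we prepend the weight-$(-1)$ summand $x\Psi_{k,-1}=x\,e_{2k}\otimes\omega(e^k\wedge e^{k+1})$. Because $[\,,\,]_{\mathfrak{m}_0(2k)}+\Psi_{k,-1}=[\,,\,]_{\mathfrak{m}_1(2k)}$ is itself a Lie bracket, $[\Psi_{k,-1},\Psi_{k,-1}]=0$, so the deformation equation collapses to
\begin{equation*}
[\Psi,\Psi]+2x[\Psi_{k,-1},\Psi]=0.
\end{equation*}
The first term contributes the $F_{j,q,r}$'s exactly as in part 1). The decisive structural remark is that $\Psi_{k,-1}$ is \emph{rank one} in the sense that its value always lies in $\langle e_{2k}\rangle$; hence every $[\Psi_{k,-1},\Psi_{l,t}](e_j,e_q,e_{q+1})$ is a multiple of $e_{2k}$, and contributes to the coefficient of $\Psi_{j,q,r}$ precisely when $j+2q+1+r=2k$ (with $r=t-1$ by weight bookkeeping). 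This is the reason the correction is added only on the diagonal $j+2q+1+r=2k$, and why the equations with $r=-1$, which have no $F$-counterpart, appear when $j+2q=2k$.

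The explicit shape of $G_{j,q,r}$ will arise from directly evaluating the six summands of (\ref{skobka_NR}) for $[\Psi_{k,-1},\Psi_{l,r+1}](e_j,e_q,e_{q+1})$ in the same spirit as (\ref{skobka_basisa}). Three contributions survive: the two in which $\Psi_{l,r+1}$ is applied first to one of the pairs and then $\Psi_{k,-1}$ produces $e_{2k}$---these supply the first two sums with binomials $\binom{q-l-1}{l-j}$ and $\binom{q-l}{l-j}$---and a diagonal $l=q$ term yielding $-(-1)^q x_{q,r+1}$. The overall sign $(-1)^{k-j-q}$ in $\tilde F_{j,q,r}$ comes from the factor $(-1)^{k-j}$ of (\ref{znachenia_cocycla}) combined with parity effects from antisymmetrization. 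The principal technical obstacle is the careful bookkeeping of binomial coefficients and signs in this evaluation so that the result collapses to the compact expression for $G_{j,q,r}$; qualitatively no new phenomena appear beyond those handled in Theorem \ref{main_theorem}, but the boundary index $s=-1$ requires attention, as the general evaluation formulas were derived under $s\ge 0$.
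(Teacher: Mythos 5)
Your proposal is correct and takes essentially the same route as the paper: part 1) is the infinite-dimensional computation of Theorem \ref{main_theorem} read off against the truncated basis of $H^3_+(\mathfrak{m}_0(n),\mathfrak{m}_0(n))$, and part 2) isolates the cross-term $2x[\Psi_{k,-1},\Psi]$, which is exactly what the paper encodes by letting the summation index $t$ in (\ref{main_equations}) run from $-1$ with $x_{j,-1}=0$ for $j\ne k$. The only detail worth adding is that your rank-one observation covers the summands $\Psi_{k,-1}(\Psi_{l,t}(\cdot,\cdot),\cdot)$, but one must also note that the reversed summands $\Psi_{l,t}(e_{2k},\cdot)$ vanish in $\mathfrak{m}_0(2k)$ for $t\ge-1$ (their would-be values would have degree exceeding $2k$), which is also what kills $[\Psi_{k,-1},\Psi_{k,-1}]$.
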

\begin{proof}
The proof with $n=2k{+}1$ completely repeats our reasonings in
infinite dimensional case with the only the difference that the
Nijenhuis-Richardson bracket $[\Psi,\Psi]$ is decomposed in this
case with respect to the finite basis $H^3_+({\mathfrak
m}_0(n),{\mathfrak m}_0(n))$. Its elements are the cocycles
$\Psi_{j,q,r}$ with the upper scripts $j,q,r$, satisfying obvious
conditions $2 \le j < q, \; 9 \le j{+}2q{+}1{+}r \le n, \; r \ge
0$.

In the case of even dimension $n=2k$ one has to decompose the
square $[x\Psi_{k,{-}1}{+}\Psi,x\Psi_{k,{-}1}{+}\Psi]$ with
respect to the basis $\Psi_{j,q,r}$ of the subspace
$\oplus_{i\ge{-}1} H^3_i({\mathfrak m}_0(2k),{\mathfrak
m}_0(2k))$. Let us denote the coefficients of this decomposition
by $\tilde F_{j,q,r}$. One can easily obtain the expressions for
them using the formulas (\ref{main_equations}), where it is
necessary to change the limits of change of the index $t$: from
${-}1$ to $r$ (instead of  $0 \le t \le r$). As $x_{j,{-}1}=0$
with $j \ne k$, then the final formulas can be obtained by easy
change of (\ref{main_equations}).
\end{proof}
What we can say about the behavior of the number of unknowns and
number of equations of the system that determines  $M_{\it
Fil}(n)$ with an encrease in $n$? The number of umknowns (the
dimension $\dim H^2_+({\mathfrak m}(n),{\mathfrak m}(n))$) was
computed by Vergne \cite{V} and it is eqaul to
$\frac{1}{4}(n{-}3)^2$ with odd $n$ and
$\frac{1}{4}(n{-}2)(n{-}4)$ with even $n$. The number of equations
in  (\ref{sluchay_nechetnoy_dim}) is equal to the dimension $\dim
H^3_+({\mathfrak m}(n),{\mathfrak m}(n))$.
\begin{proposition}
$$
\dim H^3_+({\mathfrak m}(n),{\mathfrak
m}(n))=\sum_{r=3}^{n{-}6}P_3(r),
$$
where $P_3(r)$ denotes the partition number of a natural number
$r$ exactly into three terms.
\end{proposition}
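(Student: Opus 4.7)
The plan is to compute $\dim H^3_+(\mathfrak{m}_0(n),\mathfrak{m}_0(n))$ directly from the explicit basis given in the preceding theorem, then convert the resulting index count into a sum of $P_3(r)$ by a combinatorial substitution.

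First, I would invoke part (2) of the preceding theorem, which identifies a basis of $H^3_+(\mathfrak{m}_0(n),\mathfrak{m}_0(n))$ consisting of the cocycles $\Psi_{i,j,s}$ indexed by triples with $2\le i<j$, $s\ge 0$, and $i+2j+1+s\le n$. Thus
$$
\dim H^3_+(\mathfrak{m}_0(n),\mathfrak{m}_0(n)) \;=\; \#\bigl\{(i,j,s)\in\mathbb{Z}^3 : 2\le i<j,\; s\ge 0,\; i+2j+1+s\le n\bigr\}.
$$
Group the triples by their weight $w:=i+2j+1+s$; since the minimum weight is attained at $(i,j,s)=(2,3,0)$ with $w=9$, the weight $w$ ranges over $9\le w\le n$, and we must show that for each such $w$ the number of triples equals $P_3(w-6)$. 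Summing over $w$ and setting $r=w-6$ then produces the claimed formula.

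To count triples of a fixed weight $w$, I would substitute $i'=i-2$, $j'=j-i-1$, $s'=s$, all nonnegative integers. Then $i+2j+1+s=9+3i'+2j'+s'$, so the condition $i+2j+1+s=w$ becomes
$$
3i' + 2j' + s' \;=\; w-9,\qquad i',j',s'\ge 0.
$$
On the partition side, I would parametrize a partition $a_1\ge a_2\ge a_3\ge 1$ of $r=w-6$ by the gaps $c_3:=a_3-1\ge 0$, $c_2:=a_2-a_3\ge 0$, $c_1:=a_1-a_2\ge 0$; then $a_1+a_2+a_3=3+3c_3+2c_2+c_1$, so $P_3(w-6)$ is exactly the number of nonnegative integer solutions of $3c_3+2c_2+c_1=w-9$. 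The map $(i',j',s')\longleftrightarrow(c_3,c_2,c_1)$ is a bijection between the two solution sets, hence the count of basis cocycles of weight $w$ equals $P_3(w-6)$.

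Finally I would sum over $w$:
$$
\dim H^3_+(\mathfrak{m}_0(n),\mathfrak{m}_0(n)) \;=\; \sum_{w=9}^{n} P_3(w-6) \;=\; \sum_{r=3}^{n-6} P_3(r),
$$
which is the required formula. There is no substantial obstacle here: the nontrivial input (the basis of $H^3_+$) has already been established in the preceding theorem, and the remainder is a routine change of variables together with the standard gap parametrization of partitions with a fixed number of parts. The only thing requiring any care is keeping the inequalities strict or non-strict in the right places when converting $2\le i<j$ into $i',j'\ge 0$ and $a_3\ge 1$ into $c_3\ge 0$.
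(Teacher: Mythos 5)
Your proof is correct, but it takes a genuinely different route from the paper's. You count the index set $\{(i,j,s)\,:\,2\le i<j,\ s\ge 0,\ i+2j+1+s\le n\}$ directly, grouping the triples by their total weight $w=i+2j+1+s$ and exhibiting an explicit bijection --- via the substitution $i'=i-2$, $j'=j-i-1$, $s'=s$ on one side and the gap parametrization $c_3=a_3-1$, $c_2=a_2-a_3$, $c_1=a_1-a_2$ on the other --- between the triples of weight $w$ and the partitions of $w-6$ into exactly three parts; both sets are solutions of $3c_3+2c_2+c_1=w-9$ in nonnegative integers, so the count of weight $w$ is $P_3(w-6)$ and the sum over $9\le w\le n$ gives the claim. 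The paper instead groups by the index $s$: for fixed $s$ the number of cocycles $\Psi_{i,j,s}$ equals $\sum_{k=1}^{n-s}\dim H^3_k(\mathfrak{m}_0(n))$, and invoking the previously stated formula (\ref{chislo_cocyclov}), which gives $\dim H^3_k(\mathfrak{m}_0)=P_3(k-6)-P_3(k-7)$, the inner sum telescopes to $P_3(n-s-6)$; the substitution $r=n-s-6$ together with $P_3(-1)=P_3(0)=P_3(1)=P_3(2)=0$ finishes the argument. The paper's version is shorter because it reuses the trivial-coefficient cohomology dimensions as a black box, whereas your bijection re-derives, in effect, the $q=3$ case of (\ref{chislo_cocyclov}) from scratch; in exchange your argument is self-contained and makes the combinatorial origin of the $P_3$'s transparent. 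Both are valid, and your bookkeeping of the strict and non-strict inequalities is accurate.
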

\begin{proof}
The number of basic cocycles $\Psi_{i,j,s}$ with a fixed value of
upper script  $s$ is equal to $\sum_{k=1}^{n{-}s}\dim
H^3_{k}({\mathfrak m}_0(n))$. Using the formula
(\ref{chislo_cocyclov}) for $\dim H^3_k({\mathfrak m}_0(n))$ we
get
$$
\dim H^3_+({\mathfrak m}(n),{\mathfrak m}(n))=\sum_{s=0}^{n{-}5}
\sum_{k=1}^{n{-}s} \dim H^3_{k}({\mathfrak
m}_0(n))=\sum_{s=0}^{n{-}5}
\sum_{k=1}^{n{-}s}\left(P_3(k{-}6){-}P_3(k{-}7)
\right)=\sum_{s=0}^{n{-}5} P_3(n{-}s{-}6).
$$
Replacing in the last sum upper script $s$ by $r=n{-}s{-}6$ and
taking into account that $P_3({-}1)=P_3(0)=P_3(1)=P_3(2)=0$ the
required equality is obtained.
\end{proof}
It is possible to prove in a similar way that $\dim
H^2_+({\mathfrak m}(n),{\mathfrak m}(n))=\sum_{r=2}^{n{-}3}P_2(r).
$

Using the known recursion relation
$P_3(m)=P_3(m{-}3){+}P_2(m{-}3){+}P_1(m{-}3)$ one can write down
explicit formulas for the sum $\sum_{k=3}^{n{-}6}P_3(k)$ as a
function on $n$ (see \cite{H}) moreover the answer will depend on
the residue class of $n$ modulo $6$. We will not write them down.
Let us say only that according to the formula for $P_3(m)$ from
\cite{H} the sequence $\sum_{k=3}^{n{-}6}P_3(k)$ has an asymptotic
behavior $\frac{1}{36}n^3$ with the large numbers $n$. Usually
with the classification of filiform Lie algebras of small
dimensions they obtained the equations by substitution into
$[\Psi,\Psi]$ of all triples of basic vectiors $e_i, e_j, e_k$
with $2\le i<j<k\le n$ that gave $\binom{n{-}1}{3}$ equations
(moreover the explicit formula of equations for an arbitrary $n$
have not been written down) with the asymptotic behavior
$\frac{1}{6}n^3$ with the large numbers $n$.

But the dimension $\dim H^3_+({\mathfrak m}(n),{\mathfrak m}(n))$
will be answer only with odd values of $n$. For even ones $n=2k$
the variable $x$ appears as well as the additional series of
equations $\tilde F_{i,j,{-}1}=0$. The number of equations in this
series is equal to $\dim H^3_{2k{+}1}({\mathfrak
m}_0(2k))=P_3(2k{-}5)-P_3(2k{-}6)$. Finally we have: with even
$n=2k$ the number of equation in the system is equal to
$$\sum_{r=3}^{n{-}7}P_3(r)+P_3(n{-}5).$$
A question about the minimization of the number of equations that
determine $M_{\it Fil}(n)$ appeared long ago. An algorithm for
finding the explicit formulas of such systems was introduced in
\cite{EcMN}. IT seems that this algorithm  gives as the answer the
system that was given explicitly in the present paper. At least
our example $M_{\it Fil}(12)$ coincides completely with the
analogous example from \cite{EcMN}. It is understood that we not
say that the collection of polynomials $F_{j,q,r}$ the collection
of polynomials forms the minimal system of generators of the
corresponding ideal. It appears that it is not true beginning with
certain dimension  $n_0$.
\begin{example}
It is easy to find several first values $P_3(k)$:
$$
P_3(3){=}P_3(4){=}1, P_3(5){=}2, P_3(6){=}3, P_3(7){=}4,
P_3(8){=}5, P_3(9){=}7, P_3(10){=}8, P_3(11){=}10.
$$
with their aid it is not difficult to compose the table
$$
\begin{tabular}{|c|c|c|c|c|c|c|c|c|c|c|}
\hline
&&&&&&&&&&\\[-10pt]
dimension $n$ & 9 & 10 & 11 & 12 & 13 & 14 & 15 & 16 & 17 & 18\\
&&&&&&&&&&\\[-10pt]
\hline
&&&&&&&&&&\\[-10pt]
number of equations & 1 & 3 & 4 & 8 & 11 & 18 & 23 & 33 & 41 & 55\\
[3pt] \hline
\end{tabular}
$$
it completely coincides with the results of the computer
calculations from \cite{EcMN}.
\end{example}
\begin{example}
The variety $M_{\it Fil}(12)$ of $12$-dimensional filiform Lie
algebras.
\begin{equation}
\begin{split}
\tilde F_{2,5,{-}1}=x(-2x_{2,0}{+}3x_{3,0}{-}x_{5,0})=0,\;
F_{2,3,0}={-}3x^2_{3,0}{+}x_{3,0}x_{2,0}{+}2x_{2,0}x_{4,0}=0,\\
F_{2,3,1}=-2x_{2,0}x_{4,1}{+}7x_{3,0}x_{3,1}{-}3x_{4,0}x_{3,1}{-}3x_{4,0}x_{2,1}{-}x_{3,0}x_{4,1}=0;\\
F_{2,3,2}{=}4x_{3,1}^2{-}3x_{2,1}x_{4,1}{-}3x_{3,1}x_{4,1}{+}x_{2,2}(2x_{5,0}{-}x_{4,0}){+}
x_{3,2}(8x_{3,0}{-}6x_{4,0}{+}x_{5,0}){-}x_{4,2}(x_{3,0}{+}2x_{2,0}){=}0,\\
\tilde F_{2,3,3}{=}x_{2,2}(2x_{5,1}{-}4x_{4,1}){+}x_{3,2}(9x_{3,1}{-}6x_{4,1}{+}x_{5,1}){-}x_{4,2}(3x_{3,1}{+}3x_{2,1}){+}\\
x_{2,3}(5x_{5,0}{-}5x_{4,0}){+}x_{3,3}(9x_{3,0}{-}10x_{4,0}{+}4x_{5,0}){+}x_{4,3}(x_{3,0}{-}2x_{2,0}){-}
x(2x_{2,4}+x_{3,4}){=}0,\\
F_{2,4,0}=6x^2_{4,0}{-}4x_{3,0}x_{4,0}{-}x_{4,0}x_{5,0}{+}2x_{2,0}x_{5,0}{-}x_{3,0}x_{5,0}{=}0,\\
\tilde
F_{2,4,1}=-2x_{2,0}x_{5,1}{+}5x_{3,0}x_{4,1}{+}x_{3,0}x_{5,1}{-}10x_{4,0}x_{4,1}{+}4x_{5,0}x_{4,1}{-}
3x_{5,0}x_{2,1}+\\
+4x_{4,0}x_{3,1}{+}2x_{5,0}x_{3,1}{-}6x_{4,0}x_{4,1}{+}x_{4,0}x_{5,1}{+}
x(2x_{2,2}-x_{3,2}-x_{4,2})=0;\\
\tilde
F_{3,4,0}=-4x_{4,0}^2{+}3x_{4,0}x_{5,0}{+}3x_{3,0}x_{5,0}{+}x(2x_{3,1}{+}x_{4,1})=0.
\end{split}
\end{equation}
\end{example}

\end{document}